\documentclass[a4paper,12pt]{article}%
\usepackage[utf8]{inputenc}
\usepackage[T1]{fontenc}
\usepackage{times}
\usepackage[bitstream-charter]{mathdesign}
\usepackage[all,2cell]{xypic}\UseAllTwocells
\usepackage{graphicx}
\usepackage{enumerate}
%%%%%%%%%%%%%%%%%%%%%%%%%%%%%%%%%%%%%%%%%%%%%%%%%%%%%%%%%%%%%%%%%%%%%
%\usepackage{Taugres}
\RequirePackage{amsopn} % for DeclareMathOperator.
\RequirePackage{amsbsy} % for Boldsymbol
\RequirePackage{amsmath} % for multiple lines in equations
\RequirePackage{relsize} % for \mathlarger
\RequirePackage{latexsym} % for \leadsto
\RequirePackage{theorem}%
\RequirePackage{thc}%
\newcounter{mysubsection}[section]%
\newcounter{mysubsubsection}[mysubsection]%
\theorembodyfont{\slshape}%
\newtheorem{corollary}[mysubsection]{Corollary}%
\newtheorem{lemma}[mysubsection]{Lemma}%
\newtheorem{proposition}[mysubsection]{Proposition}%
\newtheorem{question}[mysubsection]{Question}%
\newtheorem{theorem}[mysubsection]{Theorem}%
\theorembodyfont{\upshape}%
\newtheorem{example}[mysubsection]{Example}%
\newtheorem{remark}[mysubsection]{Remark}%
\def\qed{{\unskip\nobreak\hfil\penalty50%
  \hskip2em\hbox{}\nobreak\hfil$\square$%
  \parfillskip=0pt\finalhyphendemerits=0\par}}
\newenvironment{proof}%
  {\par\addvspace{\medskipamount}%
    \upshape%
    {\slshape\scshape%\bfseries%
    Proof\hskip\labelsep}}%
  {\qed%
    \addvspace{\medskipamount}}%
\parindent 0cm%
\headheight 1cm%
\setlength{\textwidth}{155mm}%
\setlength{\textheight}{200mm}%
\evensidemargin -0.55cm%
\oddsidemargin 0.20cm%
%%%%%%%%%%%%%%%%%%%%%%%%%%%%%%%%%%%%%%%%%%%%%%%%%%%%%%%%%%%%%%%%%%%%%
%\input{macros}
\newcommand\Ann{\textrm{Ann}}
\newcommand\End{\textrm{End}}
\newcommand\Hom{\textrm{Hom}}
\newcommand\id{\textrm{id}}
\newcommand\ideal[1]{\mathfrak{\lowercase{#1}}}%\ideal[1]{#1}%
\newcommand\Ideal[1]{\mathfrak{\uppercase{#1}}}%\ideal[1]{#1}%
\newcommand\Ima{\textrm{Im}}
\newcommand\Ker{\textrm{Ker}}
\newcommand\rMod{\textbf{Mod}-}
\newcommand\sepa{\vspace*{-3mm}\setlength{\itemsep}{-2mm}}%{-1mm}%
\newcommand\Soc{\textrm{Soc}}
\newcommand\Supp{\textrm{Supp}}
\newcommand\blfootnote[1]{%
  \begingroup
  \renewcommand\thefootnote{}\footnote{#1}%
  \addtocounter{footnote}{-1}%
  \endgroup}

\begin{document}
\title{Lattice decomposition of modules\blfootnote{\today \newline
J. M. Garc\'{\i}a
    (\emph{jgarciah@ugr.es})
    Department of Applied Mathematics. University of Granada. E--18071 Granada.
    Spain.\newline
P. Jara (Corresponding author)
    (\emph{pjara@ugr.es})
    Department of Algebra and IEMath--GR (Instituto de Matem\'{a}ticas). University of Granada. E--18071 Granada.
    Spain.\newline
L. M. Merino
    (\emph{lmerino@ugr.es})
    Department of Algebra and IEMath--GR (Instituto de Matem\'{a}ticas). University of Granada. E--18071 Granada.
    Spain.\newline
2020 \emph{Mathematics Subject Classification:} 16D70, 18E10, 18E35, 13C60\newline
\emph{Key words:} module, ring, lattice, lattice decomposition, Grothendieck category.}}
\author{Josefa M. Garc\'{\i}a\\
        Pascual Jara\\
        Luis M. Merino}
\date{}
\maketitle

\begin{abstract}
The first aim of this work is to characterize when the lattice of all submodules of a module is a direct product of two lattices. In particular, which decompositions of a module $M$ produce these decompositions: the \emph{lattice decompositions}. In a first \textit{\'{e}tage} this can be done using endomorphisms of $M$, which produce a decomposition of the ring $\End_R(M)$ as a product of rings, i.e., they are central idempotent endomorphisms. But since not every central idempotent endomorphism produces a lattice decomposition, the classical theory is not of application. In a second step we characterize when a particular module $M$ has a lattice decomposition; this can be done, in the commutative case in a simple way using the support, $\Supp(M)$, of $M$; but, in general, it is not so easy. Once we know when a module decomposes, we look for characterizing its decompositions. We show that a good framework for this study, and its generalizations, could be provided by the category $\sigma[M]$, the smallest Grothendieck subcategory of $\rMod{R}$ containing $M$.
\end{abstract}

\section*{Introduction}

Let $M$ be a (unitary) right $R$--module over a (unitary) ring $R$; it is well known that decompositions of $M$, as a direct sum of two submodules, are parameterized by idempotent endomorphisms in $S=\End_R(M)$. Thus, if $M=N\oplus{H}$, there exists $e\in\End_R(M)$ such that $N=e(M)$, and $H=(1-e)(M)$. In general, $e$ is not necessarily central in $S$, hence it does not produce a decomposition of $S$ in a direct product of two rings. In this paper we deal with some special decompositions of modules so that the lattice $\mathcal{L}(M)$, of all submodules of $M$, will be a direct product of two lattices: the \emph{lattice decomposition} of $M$.

This kind of decompositions are of interest as if $M=N\oplus{H}$ is a lattice decomposition, then every submodule $X\subseteq{M}$ can be expressed as a direct sum, $X=(X\cap{N})\oplus(X\cap{H})$, and this property has great importance in order to study the structure of $M$.

From this point of view, we first recall that a lattice decomposition defines an element $N\in\mathcal{L}(M)$ which is distributive; this means that for any $X,Y\in\mathcal{L}(M)$, the sublattice of $\mathcal{L}(M)$ generated by $N,X$ and $Y$ is distributive: a well known notion in lattice theory; in addition, it is complemented, as $M=N\oplus{H}$. Elements of this kind have good properties as members of the lattice $\mathcal{L}(M)$.

We exploit the existence of complemented distributive submodules of a right $R$--module $M$. In order to characterize them first we deal with endomorphisms. Thus we show a characterization of those central idempotent endomorphisms in $\End_R(M)$ that define lattice decompositions of $M$. In the particular case of modules over a commutative ring, these idempotent endomorphisms are those that belong to the closure of $R$ in $\End_R(M)$, with respect to the finite topology. In addition, we show that every complemented distributive submodule $N\subseteq{M}$ is stable under any endomorphism $f\in\End_R(M)$.

The behaviour of complemented distributive submodules is also studied, thus it is shown that for any complemented submodule $N\subseteq{M}$, and any index set $I$ we get a complemented distributive submodule $N^{(I)}\subseteq{M^{(I)}}$. This property, together with the known characterization of distributive submodules as those submodules $N\subseteq{M}$ such that for every submodule $H\subseteq{M}$ the factor modules $N/(N\cap{H})$ and $H/(N\cap{H})$ have no non--zero isomorphic subfactors allow us to extend the theory to categories which are defined directly from $M$, as the category $\sigma[M]$. Indeed, we recover a decomposition theory for these categories showing that there exists a closed relationship between decomposition of $\sigma[M]$, as a product of two subcategories, and lattice decompositions of $M$ as a right $R$--module.

\medskip

The paper is organized in sections. In the first section we recall the notions of product of lattices and the consequences of the existence of a lattice decomposition. In sections two and three we study distributive submodules of a right $R$--module, and show that simple subfactors are decisive to characterize complemented distributive submodules. In particular, if the base ring $R$ is commutative, a direct sum decomposition $M=N\oplus{H}$ is a lattice decomposition if, and only if, $\Supp(N)\cap\Supp(H)=\varnothing$. One of the main aims is to relate complemented distributive submodules $N\subseteq{M}$ and central idempotent endomorphisms. Thus we show, in examples, that not every such idempotent endomorphism defines a distributive submodule, and in Theorem~\eqref{th:151011} we show that it is necessary and sufficient that this endomorphism stabilizes every submodule. In consequence, these idempotent endomorphisms are close to multiplication by elements of $R$, and in Proposition~\eqref{pr:151011} we show that they must belong to the closure of $R$ in $\End_R(M)$ with respect to the finite topology, whenever $R$ is commutative. In order to extend these results to categories, in section four we establish, Proposition~\eqref{pr:10}, showing that direct powers preserve complemented distributive submodules. The strong relationship between decompositions of the category $\sigma[M]$ and lattice decompositions of the module $M$ is also studied.

\medskip

References to undefined terms can be find either in the following papers:
\cite{Jensen:1966},
\cite{Rajaee:2013} and
\cite{Tuganbaev:2008},
or in the books:
\cite{GRATZER:1978},
\cite{STENSTROM:1975} and
\cite{WISBAUER:1988}.

\section{Product of lattices}

Let $L_1,L_2$ be lattices, and define in the cartesian product $L_1\times{L_2}$ the operations:
\[
\begin{array}{ll}
(a_1,a_2)\wedge(b_1,b_2)=(a_1\wedge{b_1},a_2\wedge{b_2})\textrm{ and}\\
(a_1,a_2)\vee(b_1,b_2)=(a_1\vee{b_1},a_2\vee{b_2}),
\end{array}
\]
then $(L_1\times{L_2},\wedge,\vee)$ is a lattice, and the canonical projections $p_i:L_1\times{L_2}\longrightarrow{L_i}$, $i=1,2$, are lattice maps. In addition, $(L_1\times{L_2},\{p_1,p_2\})$ is the product of $L_1$ and $L_2$ in the category of lattices and lattice maps.

Examples of lattices appear in many different contexts; we are interested in those lattices that appear in module theory, i.e., if $R$ is a (unitary) ring and $M$ a right (unitary) $R$--module, in the lattice $\mathcal{L}(M)$ of all submodules of $M$, and in the particular problem of characterizing when $\mathcal{L}(M)$ is the product of two lattices.

For any right $R$--module $M$ the lattice $\mathcal{L}(M)$ has extra properties in addition to those that define a lattice, for instance:
\begin{enumerate}[(1)]\sepa
\item
$\mathcal{L}(M)$ is \textbf{bounded}, i.e., there exists a bottom element, $0\subseteq{M}$, and a top one, $M$.
\item
$\mathcal{L}(M)$ is \textbf{modular}, i.e., for any $N_1,N_2,N_3\subseteq{M}$ such that $N_1\subseteq{N_3}$, we have $(N_1+N_2)\cap{N_3}=N_1+(N_2\cap{N_3})$.
\end{enumerate}

If $\mathcal{L}(M)$ is a product of lattices, using that $\mathcal{L}(M)$ is bounded, the following easy results holds.

\begin{lemma}\label{le:20200802}
If $\mathcal{L}(M)$ is the product of two lattices, say $\mathcal{L}(M)=\mathcal{L}_1\times\mathcal{L}_2$, with projections $\{p_1,p_2\}$, $M=(M_1,M_2)$, and $0=(0_1,0_2)$, then $\mathcal{L}_1$ satisfies the following properties:
\begin{enumerate}[(1)]\sepa
\item\label{it:le:20200802-1}
$\mathcal{L}_1$ is a bounded lattice with bottom $0_1$ and top $M_1$.
\item\label{it:le:20200802-2}
The map $q_1:\mathcal{L}_1\longrightarrow\mathcal{L}(M)$, defined $q_1(X)=(X,0_2)$, is a one--to--one lattice map.
\item\label{it:le:20200802-3}
There is a lattice isomorphism between $\mathcal{L}_1$ and $\{H\subseteq{M}\mid\;H\subseteq{M_1}\}$.
\item\label{it:le:20200802-4}
The map $h_1:\mathcal{L}_1\longrightarrow\mathcal{L}(M)$, defined $h_1(X)=(X,M_2)$, is a one--to--one lattice map.
\item\label{it:le:20200802-5}
There is a lattice isomorphism between $\mathcal{L}_1$ and $\{L\subseteq{M}\mid\;M_2\subseteq{L}\}$.
\end{enumerate}
The same properties hold for the lattice $\mathcal{L}_2$. In particular, $M$ is the direct product of $M_1$ and $M_2$, and there is a lattice isomorphism $\mathcal{L}(M)\cong\mathcal{L}(M_1)\times\mathcal{L}(M_2)$.
\end{lemma}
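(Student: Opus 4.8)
The strategy is to work with the projections $p_1,p_2$ and the two distinguished elements $M=(M_1,M_2)$ and $0=(0_1,0_2)$ of $\mathcal{L}(M)=\mathcal{L}_1\times\mathcal{L}_2$, and to read off everything from the product structure. First I would note that $p_1$ is surjective with $p_1(M)=M_1$ and $p_1(0)=0_1$; since $p_1$ is a lattice map and $0\subseteq X\subseteq M$ for all $X\in\mathcal{L}(M)$, applying $p_1$ gives $0_1\le p_1(X)\le M_1$ for every element of $\mathcal{L}_1$, which is (\ref{it:le:20200802-1}). For (\ref{it:le:20200802-2}) and (\ref{it:le:20200802-4}) the maps $q_1(X)=(X,0_2)$ and $h_1(X)=(X,M_2)$ are injective because the first coordinate determines $X$, and they are lattice maps because meet and join in $\mathcal{L}_1\times\mathcal{L}_2$ are computed coordinatewise and $0_2,M_2$ are idempotent for $\wedge,\vee$ respectively (i.e. $0_2\wedge 0_2=0_2$, $0_2\vee 0_2=0_2$, and likewise for $M_2$).

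**Identifying the two isomorphic copies inside $\mathcal{L}(M)$.** For (\ref{it:le:20200802-3}) I would observe that the image of $q_1$ is exactly the down-set $\{\,Z\in\mathcal{L}(M)\mid Z\le M\wedge(M_1,0_2)\,\}$; but $(M_1,0_2)=q_1(M_1)$ is precisely the submodule $M_1$ of $M$ viewed in $\mathcal{L}(M)$ (under the identification in the last sentence of the statement, which I will actually want to prove in parallel), so this image is $\{H\subseteq M\mid H\subseteq M_1\}$, and $q_1$ restricts to a lattice isomorphism onto it. Dually, for (\ref{it:le:20200802-5}) the image of $h_1$ is the up-set $\{\,L\in\mathcal{L}(M)\mid L\ge (0_1,M_2)\,\}$, i.e. $\{L\subseteq M\mid M_2\subseteq L\}$, and $h_1$ is an isomorphism onto it. The symmetric statements for $\mathcal{L}_2$ follow by swapping the roles of the two factors.

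**The module-theoretic conclusion.** The final sentence — that $M$ is actually the direct product (equivalently direct sum) of the submodules $M_1$ and $M_2$, with $\mathcal{L}(M)\cong\mathcal{L}(M_1)\times\mathcal{L}(M_2)$ — is where I expect the real work to be, since so far everything is formal lattice theory and does not yet use that $\mathcal{L}(M)$ is the submodule lattice of a module. Here I would argue: the elements $(M_1,0_2)$ and $(0_1,M_2)$ of $\mathcal{L}(M)$ are submodules, call them $N_1,N_2$; their meet is $(M_1,0_2)\wedge(0_1,M_2)=(0_1,0_2)=0$, so $N_1\cap N_2=0$, and their join is $(M_1,0_2)\vee(0_1,M_2)=(M_1,M_2)=M$, so $N_1+N_2=M$; hence $M=N_1\oplus N_2$ as $R$-modules. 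Then by (\ref{it:le:20200802-3}) we have $\mathcal{L}_1\cong\{H\subseteq M\mid H\subseteq N_1\}=\mathcal{L}(N_1)$ (the submodule lattice of $N_1$, using that submodules of $M$ contained in $N_1$ are exactly the submodules of $N_1$), and likewise $\mathcal{L}_2\cong\mathcal{L}(N_2)$, giving $\mathcal{L}(M)\cong\mathcal{L}(N_1)\times\mathcal{L}(N_2)$. The one subtle point to check carefully is the compatibility of the identification in (\ref{it:le:20200802-3}) with the honest submodule structure — i.e. that the abstract lattice element $(M_1,0_2)$ really is the submodule $N_1$ and that the down-set it generates consists of genuine submodules of $M$ — but this is immediate once we name $N_1:=q_1(M_1)\in\mathcal{L}(M)$ and use that $q_1$ is an order isomorphism onto the down-set of $N_1$. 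The main obstacle, then, is purely bookkeeping: keeping straight which copy of $\mathcal{L}_1$ (the "$0_2$-slice" versus the "$M_2$-slice") one is using, and verifying that the coordinatewise operations do interact correctly with $0_2$ and $M_2$ so that $q_1$ and $h_1$ are genuine lattice embeddings rather than merely order embeddings.
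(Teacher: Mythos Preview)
Your proposal is correct and follows essentially the same route as the paper: for (\ref{it:le:20200802-1}) you use monotonicity of $p_1$ applied to $0\le X\le M$, for (\ref{it:le:20200802-2})--(\ref{it:le:20200802-5}) you invoke the coordinatewise nature of the product operations, and for the final claim you compute $(M_1,0_2)\wedge(0_1,M_2)=0$ and $(M_1,0_2)\vee(0_1,M_2)=M$ to get the direct sum. The paper does exactly this, only more tersely---it does not spell out the identification of the abstract elements $(M_1,0_2)$, $(0_1,M_2)$ with the submodules $N_1,N_2$ that you (rightly) pause over, but otherwise the arguments coincide.
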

\begin{proof}
\eqref{it:le:20200802-1}. %
For any $X\in\mathcal{L}_1$ we have
$(0_1,0_2)=0\leq(X,0_2)$, hence $0_1=p_1(0_1,0_2)\leq{p_1(X,0_2)}=X$, and
$(X,M_2)\leq{M=(M_1,M_2)}$; therefore, $X=p_1(X,M_2)\leq{p_1(M_1,M_2)}=M_1$.
\par
\eqref{it:le:20200802-2} and \eqref{it:le:20200802-3}. %
It is clear that $q_1$ is a lattice map, and the announced isomorphism is given by $q_1$.
\par
\eqref{it:le:20200802-4} and \eqref{it:le:20200802-5}. %
It is clear that $h_1$ is a lattice map, and the announced isomorphism is given by $h_1$.
\par
Observe that $M_1\cap{M_2}=0$ and $M_1+M_2=M$, hence $M=M_1\times{M_2}$.
\end{proof}

A right $R$--module $M$ has a \textbf{lattice decomposition} whenever $\mathcal{L}(M)$ is a product of two nontrivial lattices.

It is clear that not every decomposition of a module $M$ as a direct product gives a lattice decomposition of $\mathcal{L}(M)$ in a product of lattices. See the following example.

\begin{example}
Consider the abelian group $M=\mathbb{Z}_2\times\mathbb{Z}_2$, the lattice of subgroups of $M$ is not a product of two nontrivial lattices; in particular, $\mathcal{L}(M)$ is not the product $\mathcal{L}(\mathbb{Z}_2)\times\mathcal{L}(\mathbb{Z}_2)$.
\end{example}

\begin{example}
A commutative ring $A$ has a lattice decomposition if, and only if, $A$ is the product of two nontrivial ideals. Indeed, if $\mathcal{L}(A)=\mathcal{L}_1\times\mathcal{L}_2$, there exist ideals $\ideal{a}_1,\ideal{a}_2\subseteq{A}$ such that $A=\ideal{a}_1\times\ideal{a}_2$.
Otherwise, if $A=\ideal{a}_1\times\ideal{a}_2$, there are idempotent elements $a_i\in\ideal{a}_i$, $i=1,2$, such that $1=a_1+a_2$. For any ideal $\ideal{a}\subseteq{A}$ we have $\ideal{a}=\ideal{a}a_1\times\ideal{a}a_2$, and an isomorphism $\mathcal{L}(A)\cong\mathcal{L}(\ideal{a}_1)\times\mathcal{L}(\ideal{a}_2)$.
\end{example}

\begin{example}
This result for non--commutative rings does not hold. Let us consider a field $K$ and the matrix ring $M_2(K)$ of all square matrices of order 2. The ideals $\ideal{a}_1=\begin{pmatrix}K&K\\0&0\end{pmatrix}$ and $\ideal{a}_2=\begin{pmatrix}0&0\\K&K\end{pmatrix}$ satisfy $M_2(K)=\ideal{a}_1\oplus\ideal{a}_2$. Otherwise, each $\ideal{a}_i$ is a simple right $M_2(K)$--module, hence $\mathcal{L}(\ideal{a}_i)=\{0,\ideal{a}_i\}$, but $\mathcal{L}(M_2(K))$ is not the product $\mathcal{L}(\ideal{a}_1)\times\mathcal{L}(\ideal{a}_2)$ because, for any $0\neq{a}\in{K}$, the right ideal $\begin{pmatrix}1&0\\a&0\end{pmatrix}M_2(K)$ is not in this product. See Corollary~\eqref{co:20200910} to determine when a ring $R$ have a lattice decomposition as right $R$--module.
\end{example}

Our aim in the next section shall be to show some characterizations of modules having a lattice decomposition.

\section{Distributive submodules}

Let $M$ be a right $R$--module. If $N\subseteq{M}$ is a submodule, there exists a short exact sequence $0\to{N}\to{M}\to{M/N}\to0$, and maps
\[
\xy
 (0,0)*+{\mathcal{L}(N)}="a",
 (30,0)*+{\mathcal{L}(M)}="b",
 (60,0)*+{\mathcal{L}(M/N)}="c",
 \ar@{->}"a";"b"^{i_*}
 \ar@/^+5ex/"b";"c"^{p_*}
 \ar@{->}"c";"b"_{p^*}
 \ar@/^+5ex/"b";"a"_{i^*}
\endxy
\]
Defined by:
\begin{enumerate}[(1)]\sepa
\item\label{it:20200803-1}
$i_*(X)=X$, for every $X\subseteq{N}$; it is a lattice homomorphism.
\item\label{it:20200803-2}
$i^*(Y)=Y\cap{N}$, for every $Y\subseteq{M}$; it satisfies $i^*(Y_1\wedge{Y_2})=i^*(Y_1)\wedge{i^*(Y_2)}$, but it is not a lattice homomorphism unless $N$ satisfies $(Y_1+Y_2)\cap{N}=(Y_1\cap{N})+(Y_2\cap{N})$ for any $Y_1,Y_2\subseteq{M}$.
%it is dual distributive.
\item\label{it:20200803-3}
$p_*(Y)=(Y+N)/N$, for every $Y\subseteq{M}$; it satisfies $p_*(Y_1\vee{Y_2})=p_*(Y_1)\vee{p_*(Y_2)}$, but it is not a lattice homomorphism unless $N$ satisfies  $(Y_1\cap{Y_2})+N=(Y_1+N)\cap(Y_2+N)$ for any $Y_1,Y_2\subseteq{M}$.
%it is distributive.
\item\label{it:20200803-4}
$p^*(Y/N)=Y$, for every $Y/N\subseteq{M/N}$; it is a lattice homomorphism.
\end{enumerate}
Thus, in the above diagram all maps are lattice maps if, and only if, $N$ satisfies conditions in \eqref{it:20200803-2} and in \eqref{it:20200803-3}. In \cite{GRATZER:1978} an element in a lattice satisfying  property in \eqref{it:20200803-3} is called a \textbf{distributive element}, and if it satisfies property in \eqref{it:20200803-2}, a \textbf{dual distributive element}, proving in \cite[Theorem III.2.6]{GRATZER:1978} that an element in a modular lattice is distributive if and only if it is dual distributive if, and only if, the sublattice generated by $N,Y_1$ and $Y_2$, in the former notation, is distributive.

We call a submodule $N\subseteq{M}$ \textbf{distributive} whenever $(Y_1\cap{Y_2})+N=(Y_1+N)\cap(Y_2+N)$ for any $Y_1,Y_2\subseteq{M}$, or equivalently if $(Y_1+Y_2)\cap{N}=(Y_1\cap{N})+(Y_2\cap{N})$ for any $Y_1,Y_2\subseteq{M}$, and observe that this is also equivalent to the condition that the sublattice of $\mathcal{L}(M)$, generated by $N,Y_1$ and $Y_2$, is distributive.

Our aim in this section is to characterize distributive submodules of a module. To do that we need  the following definition. Let $M$ be a right $R$--module, a \textbf{subfactor} of $M$ is a submodule of a homomorphic image of $M$. Observe that for any subfactor $L$ of a right $R$--module $M$, and any submodule $K\subseteq{L}$ we may build a commutative diagram
\[
\begin{xy}
\xymatrix{
&K\ar@{+->}[rd]&&\\
&&L\ar@{+->}[dd]\ar@{->>}[rd]\\
&&&L/K\ar@{+->}[dd]\\
M\ar@{->>}[rr]&&X\ar@{->>}[rd]&\\
&&&X/K
}\end{xy}
\]
Therefore, if $L$ is a subfactor of $M$, then $K$ and $L/K$ are also subfactors of $M$.

This situation can be enhanced if we make use of elements of the module $M$. So, distributive submodules can be also characterized in the following way; where we refer to \cite[Theorem 1.6]{Stephenson:1974} or \cite[Proposition 1.1]{Barnard:1981b} for condition (b), and to \cite{Davison:1974} for condition (c).

\begin{proposition}\label{pr:42}
Let $M$ be a right $R$--module, and $N\subseteq{M}$ be a submodule, the following statements are equivalent:
\begin{enumerate}[(a)]\sepa
\item
$N\subseteq{M}$ is a distributive submodule.
\item%{\cite[Theorem 1.6]{Stephenson:1974}, \cite[Proposition 1.1]{Barnard:1981b}}
$(N:m)+(mR:n)=R$ for any $m\in{M}$ and $n\in{N}$.
\item%{\cite{Davison:1974}}
For every submodule $H\subseteq{M}$ the modules $N/(N\cap{H})$ and $H/(N\cap{H})$ have no non--zero isomorphic subfactors.
\item
For every submodule $H\subseteq{M}$, the modules $N/(N\cap{H})$ and $H/(N\cap{H})$ have no simple isomorphic subfactors.
\item
For any $m\in{M}$ and $n\in{N}$, the cyclic modules $(n+(N\cap{mR}))R$ and $mR/(N\cap{mR})$ have no non--zero isomorphic subfactors.
\item
For any $m\in{M}$ and $n\in{N}$, the cyclic modules $(n+(N\cap{mR}))R$ and $mR/(N\cap{mR})$ have no simple isomorphic subfactors.
\end{enumerate}
\end{proposition}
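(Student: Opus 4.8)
The equivalences split naturally into two groups. First, the "global" conditions (a), (c), (d), and then the "element-wise" conditions (b), (e), (f); I would prove a cycle through the global ones, then show (a)$\Leftrightarrow$(b) by reduction to cyclic submodules, and finally observe that (e), (f) are just (c), (d) read for the pair $N\cap mR\subseteq mR$ instead of $N\cap H\subseteq M$, after checking that testing on cyclic submodules suffices.

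Here is the intended order of steps. Step 1: (a)$\Rightarrow$(c). Assuming $N$ distributive, fix $H\subseteq M$ and suppose $U/(N\cap H)$ and $V/(N\cap H)$ are nonzero isomorphic subfactors of $N/(N\cap H)$ and $H/(N\cap H)$ respectively; I would lift a witnessing isomorphism and use the distributivity identity $(Y_1\cap Y_2)+N=(Y_1+N)\cap(Y_2+N)$ (equivalently the meet form $(Y_1+Y_2)\cap N=(Y_1\cap N)+(Y_2\cap N)$) applied to suitable submodules $Y_1\subseteq N$, $Y_2\subseteq H$ to force the subfactor to be zero — this is the classical argument that a distributive element cannot "see" the same simple (or nonzero) piece on both sides of a cross-cut. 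Step 2: (c)$\Rightarrow$(d) is trivial (simple is a special case of nonzero). Step 3: (d)$\Rightarrow$(a). This is the crux and I expect it to be the main obstacle: from the absence of common simple subfactors I must recover the lattice identity. The natural route is the contrapositive via condition (b): if $N$ is not distributive, then by \cite[Theorem 1.6]{Stephenson:1974} / \cite[Proposition 1.1]{Barnard:1981b} there exist $m\in M$, $n\in N$ with $(N:m)+(mR:n)\neq R$; choosing a maximal right ideal $\mathfrak{p}$ containing this sum produces a simple module $R/\mathfrak{p}$ that is simultaneously a quotient of a submodule of $N$ (namely of $nR$ relative to $N\cap mR$) and of a subfactor of $mR$ (hence of $H=mR$), contradicting (d). So Step 3 really factors as (d)$\Rightarrow\neg$(b)$\Rightarrow\neg$(a) contrapositively, using (b)$\Leftrightarrow$(a) as a black box from the cited literature.

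Step 4: the element-wise reformulations. For (a)$\Rightarrow$(e): condition (e) is exactly condition (c) applied with the ambient module $mR$ in place of $M$, the submodule $N\cap mR$ in place of $N$, and $H=nR\cap$ (something) — more precisely, one checks that a distributive submodule $N\subseteq M$ restricts to a distributive submodule $N\cap mR\subseteq mR$ (immediate from the meet-form identity, since intersecting all submodules with $mR$ preserves the identity), and then applies the already-proven (a)$\Rightarrow$(c) inside $mR$. Step 5: (e)$\Rightarrow$(f) trivial. Step 6: (f)$\Rightarrow$(b). Here I would argue contrapositively again: if $(N:m)+(mR:n)\neq R$, pick a maximal right ideal $\mathfrak{p}\supseteq(N:m)+(mR:n)$; then $R/\mathfrak{p}$ is a simple quotient of $(n+(N\cap mR))R\cong R/(N\cap mR:n)$ and also, since $(mR:n)\subseteq\mathfrak{p}$ forces the relevant annihilator containment, a simple subfactor of $mR/(N\cap mR)$, so (f) fails. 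This closes a cycle (a)$\Rightarrow$(c)$\Rightarrow$(d)$\Rightarrow$(a) and a parallel cycle (a)$\Rightarrow$(e)$\Rightarrow$(f)$\Rightarrow$(b)$\Rightarrow$(a), the last arrow being the cited equivalence (b)$\Leftrightarrow$(a).

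The genuinely delicate point throughout is the passage from "no common simple subfactors" back to the lattice identity (Step 3), and the cleanest way to handle it is precisely to route it through (b): the existence statement in \cite{Stephenson:1974} converts the failure of distributivity into a concrete failure of the coprimality condition $(N:m)+(mR:n)=R$, and then a maximal right ideal above the deficient sum manufactures the forbidden simple. I would be careful to record the small lemma that distributivity is inherited by $N\cap mR\subseteq mR$, since that is what lets the element-wise conditions (e), (f) be obtained "for free" from (c), (d); conversely, going from the cyclic conditions back to the global ones uses that the defining identity for distributivity need only be tested on cyclic $Y_1,Y_2$ (a standard reduction, since both sides of the identity commute with directed unions of submodules).
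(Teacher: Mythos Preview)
Your plan is essentially correct and close to the paper's own argument, but two points deserve adjustment.

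First, the paper does not treat (a)$\Leftrightarrow$(b) as a black box; it proves both directions by short element manipulations (for (a)$\Rightarrow$(b), expand $n\in N\cap((n-m)R+mR)$ via distributivity; for (b)$\Rightarrow$(a), given $n=x+y\in N\cap(X+Y)$ write $1=a+b$ with $a\in(N:x)$, $b\in(xR:n)$ and split $n$). Since the proposition only \emph{credits} \cite{Stephenson:1974}, \cite{Barnard:1981b} for (b), citing it is defensible, but the paper is self-contained here.

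Second, your Step~1, the direct (a)$\Rightarrow$(c), is the one place where your sketch is genuinely vague: ``lift a witnessing isomorphism and use the distributivity identity to force the subfactor to be zero'' is not obviously executable as a pure lattice argument. The paper avoids this by proving (b)$\Rightarrow$(c) instead: given a putative common nonzero subfactor with elements $x\in N$, $y\in H$ and an isomorphism $\eta$ matching $f(\overline{x})$ to $g(\overline{y})$, write $1=a+b$ with $a\in(N:y)$, $b\in(yR:x)$ and show both $\overline{xa}$ and $\overline{xb}$ lie in $\Ker f$. Since you already have (a)$\Leftrightarrow$(b), routing through (b)$\Rightarrow$(c) is both cleaner and exactly what the paper does.

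Third, your Step~4 detour via ``distributivity is inherited by $N\cap mR\subseteq mR$'' is unnecessary: (e) and (f) are literal specialisations of (c) and (d) with $H=mR$, because $(n+(N\cap mR))R$ is a submodule of $N/(N\cap mR)$. The paper accordingly records (c)$\Rightarrow$(d) and (e)$\Rightarrow$(f) as trivial and tacitly uses (c)$\Rightarrow$(e), (d)$\Rightarrow$(f) the same way.

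Your (f)$\Rightarrow$(b) matches the paper exactly: pick a maximal right ideal $\ideal{m}\supseteq(N:m)+(mR:n)$ and observe that $R/\ideal{m}$ is a simple quotient both of $\overline{n}R$ and of $\overline{m}R=mR/(N\cap mR)$. So the logical skeleton coincides with the paper's; just replace your direct (a)$\Rightarrow$(c) by (b)$\Rightarrow$(c), and drop the inheritance lemma.
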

\begin{proof}
(a) $\Rightarrow$ (b). %
By hypothesis we have $N\cap((n-m)R+mR)=(N\cap{(n-m)R})+(N\cap{mR})$; hence
\[
n=x+y,\mbox{ where }x\in{N\cap{(n-m)R}}\mbox{ and }y\in{N\cap{mR}}.
\]
Let $a,b\in{R}$ such that $x=(n-m)a$, hence $ma=na-x\in{N}$, and $y=mb$. In addition, we have $n(1-a)=n-na=x+y-na=(n-m)a+mb-na=m(b-a)\in{mR}$. As a consequence, $(N:m)+(mR:n)=R$.
\par (b) $\Rightarrow$ (a). %
For any $X,Y\subseteq{M}$ we always have $(N\cap{X})+(N\cap{Y})\subseteq{N\cap(X+Y)}$. On the other hand, let $n=x+y\in{N\cap(X+Y)}$, where $x\in{X}$ and $y\in{Y}$, and consider the pair $n\in{N}$ and $x\in{M}$. By hypothesis, we have $(N:x)+(xR:n)=R$, there exist $a\in(N:x)$, $b\in(xR:n)$ such that $a+b=1$, and we have: $n=xa+ya+nb$; since $xa,nb\in{N\cap{X}}$, hence $ya=n-xa-nb\in{N}$, whence $ya\in{N\cap{Y}}$. Therefore, $n\in(N\cap{X})+(N\cap{Y})$.
\par (b) $\Rightarrow$ (c). %
For any non--zero subfactor $SF_1$ of $N/N\cap{H}$, and any subfactor $SF_2$ of $H/(N\cap{H})$, let us consider the diagram
\[
\resizebox{14.5cm}{!}{
\begin{xy}
\xymatrix{
X\ar@{->>}[rr]\ar@{+->}[d]&&X/(N\cap{H})\ar@{+->}[d]\ar@{->>}[rr]^f&&SF_1\ar@{+->}[d]\ar[r]^\eta_\cong
 &SF_2\ar@{+->}[d]&&Y/(N\cap{H})\ar@{->>}[ll]_g\ar@{+->}[d]&&Y\ar@{+->}[d]\ar@{->>}[ll]\\
N\ar@{->>}[rr]&&N/(N\cap{H})\ar@{->>}[rr]&&\bullet
 &\bullet&&H/(N\cap{H})\ar@{->>}[ll]&&H\ar@{->>}[ll]
}\end{xy}}
\]
there exists $0\neq{x}\in{X}$ such that $f(\overline{x})\neq0$, where $\overline{x}=x+(N\cap{H})$. Let $y\in{Y}$ such that $\eta{f}(\overline{x})=g(\overline{y})$, where $\overline{y}=y+(N\cap{H})$. By the hypothesis $(N:y)+(yR:x)=R$; let $1=a+b$ with $a\in(N:y)$ and $b\in(yR:x)$, then $x=x(a+b)=xa+xb$. On the other hand, $\eta{f}(\overline{xa})=g(\overline{y}a)=0$, whence $\overline{xa}\in\Ker(f)$; since $\overline{xb}\in\Ker(f)$, we have $\overline{x}\in\Ker(f)$, which is a contradiction.
\par (c) $\Rightarrow$ (d), (e) $\Rightarrow$ (f). %
They are trivial.
\par (f) $\Rightarrow$ (b). %
Let $x\in{M}$ and $n\in{N}$, if $(N:x)+(xR:n)\neq{R}$, there exists a maximal right ideal $\ideal{m}\subseteq{R}$ such that $(N:x)+(xR:n)\subseteq\ideal{m}$, and for any $a\in\ideal{m}$ we have $1-a\notin\ideal{m}$, hence $1-a\notin(N:x),(xR,n)$. We proceed as follows:
\begin{enumerate}[(1)]\sepa
\item
Since $1-a\notin(N:x)$, then $(1-a)x\notin{N\cap{xR}}$, and for any $a\in\ideal{m}$ we have $\overline{x}\neq{\overline{x}a}$ in $M/(N\cap{xR})$, i. e., $\overline{x}\ideal{m}\subsetneqq{\overline{x}R}$, and $xR/(N\cap{xR})$ has a simple subfactor $\overline{x}R/\overline{x}\ideal{m}\cong{R/\ideal{m}}$.
\item
Since $1-a\notin(xR:n)$, then $n(1-a)\notin{N\cap{xR}}$, and for any $a\in\ideal{m}$ we have $\overline{n}\neq{\overline{n}a}$ in $M/(N\cap{xR})$, i. e., $\overline{n}\ideal{m}\subsetneqq{\overline{n}R}$, and $\overline{n}R$ has a simple subfactor $\overline{n}R/\overline{n}\ideal{m}\cong{R/\ideal{m}}$.
\end{enumerate}
In any case we have a contradiction.
\end{proof}

As a consequence, of the above proposition, if $\rMod{R}$ has only, up to isomorphism, one simple right $R$--module, for instance if either $R$ has only one maximal right ideal, i.e., $R$ is a local ring, then we have the following proposition; compare with  \cite{Barnard:1981b}.

\begin{proposition}\label{pr:32}
Let $R$ be a ring such that $\rMod{R}$ has, up to isomorphism, only a simple right module, for any proper submodule $N\subsetneqq{M}$ the following statements are equivalent:
\begin{enumerate}[(a)]\sepa
\item
$N\subseteq{M}$ is distributive.
\item
$N\subseteq{mR}$ for any $m\in{M\setminus{N}}$.
\item
$N$ is comparable with every non--zero submodule of $M$.
\end{enumerate}
In particular, if $0\neq{N}\subsetneqq{M}$ is a distributive submodule, then $\Soc(M)\subseteq{N}$ and it is essential in $M$.
\end{proposition}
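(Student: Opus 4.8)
The plan is to establish the cycle (a)$\Rightarrow$(b)$\Rightarrow$(c)$\Rightarrow$(a), observing that only the first of these implications will need the hypothesis on $\rMod{R}$, and then to read the ``in particular'' assertion directly off statement~(c).

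First I would dispatch the implications that hold for \emph{any} submodule. For (b)$\Rightarrow$(c): given a non--zero submodule $H\subseteq{M}$, either $H\subseteq{N}$, or there is $m\in{H}\setminus{N}$ and then (b) gives $N\subseteq{mR}\subseteq{H}$; so $N$ is comparable with $H$. For (c)$\Rightarrow$(b): if $m\in{M}\setminus{N}$ then $mR\neq0$ and, by (c), is comparable with $N$; the inclusion $mR\subseteq{N}$ is impossible, so $N\subseteq{mR}$. And (b)$\Rightarrow$(a) is immediate from Proposition~\eqref{pr:42}(b): when $m\in{N}$ one has $(N:m)=R$, and when $m\notin{N}$ one has $N\subseteq{mR}$, hence $(mR:n)=R$ for every $n\in{N}$; either way $(N:m)+(mR:n)=R$, so $N$ is distributive. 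Thus the whole proposition will follow once (a)$\Rightarrow$(b) is proved.

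The substantive step is (a)$\Rightarrow$(b), and this is where the hypothesis enters. Assume $N$ is distributive and suppose, towards a contradiction, that there is $m\in{M}\setminus{N}$ with $N\not\subseteq{mR}$; choose $n\in{N}$ with $n\notin{mR}$, and pass to $\overline{M}=M/(N\cap{mR})$. The cyclic submodule $\overline{n}R=(n+(N\cap{mR}))R$ of $\overline{M}$ is non--zero, since $n\notin{N\cap{mR}}$; and the cyclic module $mR/(N\cap{mR})$ is non--zero, since $m\in{mR}\setminus{N}$ and $N\cap{mR}\subseteq{N}$. Now a non--zero cyclic module has a maximal submodule, hence a simple quotient; by hypothesis $\rMod{R}$ has, up to isomorphism, a single simple module, so these two simple quotients are isomorphic. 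Consequently $\overline{n}R$ and $mR/(N\cap{mR})$ possess isomorphic simple subfactors, contradicting Proposition~\eqref{pr:42}(f). Hence $N\subseteq{mR}$ for every $m\in{M}\setminus{N}$, which is (b). The one delicate point is exactly this last chain: that a non--zero cyclic module always admits a simple quotient (Zorn's lemma applied to its proper submodules) and that a quotient counts as a subfactor, so that the uniqueness of the simple module forces the configuration forbidden by Proposition~\eqref{pr:42}(f); everything else is routine bookkeeping with the element--wise description of distributivity.

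For the ``in particular'' part, assume $0\neq{N}\subsetneqq{M}$ is distributive, so (c) holds. If $S\subseteq{M}$ is simple, then $N$ and $S$ are comparable; if $N\subseteq{S}$ then $N=S$ (as $N\neq0$ and $S$ is simple), so in either case $S\subseteq{N}$. Summing over all simple submodules gives $\Soc(M)\subseteq{N}$. Finally, for any $0\neq{H}\subseteq{M}$, comparability yields $N\cap{H}\in\{N,H\}$, which is non--zero; hence $N$ is essential in $M$.
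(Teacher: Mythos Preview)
Your proof is correct and follows essentially the same strategy as the paper: reduce everything to the characterizations in Proposition~\eqref{pr:42} and use the uniqueness of the simple module to force one of two modules to vanish. The only cosmetic differences are which clauses of Proposition~\eqref{pr:42} you invoke---you use the element--wise criteria (b) and (f), whereas the paper uses the subfactor criterion (d) (with $H=mR$) for (a)$\Rightarrow$(b) and again (c)/(d) for (c)$\Rightarrow$(a)---and that you close the cycle via (c)$\Rightarrow$(b)$\Rightarrow$(a) rather than giving (c)$\Rightarrow$(a) directly; the paper likewise reads the ``in particular'' claim off (b) rather than (c), but the arguments are interchangeable.
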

\begin{proof}
(a) $\Rightarrow$ (b). %
If $N\subseteq{M}$ is distributive and $m\in{M}\setminus{N}$ then $\dfrac{N}{N\cap{mR}}$ and $\dfrac{mR}{N\cap{mR}}$ have no simple isomorphic subfactors, hence one of them is equal to zero. If $\dfrac{mR}{N\cap{mR}}=0$, then $mR\subseteq{N}$, which is a contradiction, hence $\dfrac{N}{N\cap{mR}}=0$, and $N\subseteq{mR}$.
\par
(b) $\Rightarrow$ (c). %
Let $H\subseteq{M}$ be a submodule. If $H\nsubseteq{N}$, there exists $h\in{H}\setminus{N}$, hence $N\subseteq{hR}\subseteq{H}$.
\par
(c) $\Rightarrow$ (a). %
Let $H\subseteq{M}$ be a submodule, then either $N\subseteq{H}$, hence $\dfrac{N}{N\cap{H}}=0$, or $H\subseteq{N}$, hence $\dfrac{H}{N\cap{H}}=0$.
\par
By (b) we have that $N\subseteq{M}$ is essential. If $H\subseteq{M}$ is simple and $H\nsubseteq{N}$, there exists $h\in{H\setminus{N}}$, and $N\subseteq{hR}\subseteq{H}$, so $N=H$, which is a contradiction. As a consequence, for any simple submodule $H\subseteq{M}$ we have $H\subseteq{N}$, and $\Soc(M)\subseteq{N}$.
\end{proof}

A second consequence of the afore--mentioned characterization of distributive submodules given in Proposition~\eqref{pr:42} is the following proposition.

\begin{proposition}
Let $M$ be a right $R$--module, $N\subseteq{M}$ a distributive submodule and $H\subseteq{M}$ a submodule such that $N\cap{H}=0$, then $\Hom_R(N,H)=0=\Hom_R(H,N)$.
\end{proposition}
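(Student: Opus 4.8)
The plan is to read off the conclusion directly from the characterization of distributive submodules in Proposition~\eqref{pr:42}, specifically from the equivalence (a) $\Leftrightarrow$ (c). Applying that proposition to the given submodule $H\subseteq{M}$, and using the hypothesis $N\cap{H}=0$, condition (c) becomes the statement that the modules $N/(N\cap{H})=N$ and $H/(N\cap{H})=H$ have no non--zero isomorphic subfactors. So the whole task reduces to observing that a non--zero homomorphism between $N$ and $H$ (in either direction) would manufacture such a subfactor.

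First I would take $0\neq{f}\in\Hom_R(N,H)$ and consider $\Ima(f)\subseteq{H}$. Since $f\neq0$, $\Ima(f)$ is a non--zero submodule of $H$, hence a non--zero subfactor of $H$. On the other hand, by the first isomorphism theorem $\Ima(f)\cong{N/\Ker(f)}$, which is a homomorphic image of $N$, hence a (non--zero) subfactor of $N$. Thus $N$ and $H$ share a non--zero isomorphic subfactor, contradicting condition (c) of Proposition~\eqref{pr:42}; therefore $\Hom_R(N,H)=0$. The argument for $\Hom_R(H,N)=0$ is the mirror image: a non--zero $g\in\Hom_R(H,N)$ yields $\Ima(g)\cong{H/\Ker(g)}$, a non--zero subfactor common to $H$ and $N$, again a contradiction.

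I expect there to be essentially no obstacle here; the only thing worth being careful about is the bookkeeping that the image of a module homomorphism is simultaneously a subfactor of the source (as a quotient) and of the target (as a submodule), which is exactly the "subfactor of a subfactor is a subfactor" remark made just before Proposition~\eqref{pr:42}. One could alternatively phrase the proof using condition (d) instead of (c), replacing "subfactor" by "simple subfactor" and passing to a simple subfactor of $\Ima(f)$ when one exists; but invoking (c) is cleaner and avoids having to worry about whether $\Ima(f)$ has a simple subfactor at all, so that is the route I would take.
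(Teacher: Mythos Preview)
Your proposal is correct and follows exactly the same route as the paper: the paper's proof simply observes that for any $f:N\longrightarrow{H}$ the image $\Ima(f)$ is a common subfactor of $N$ and $H$, forcing $\Ima(f)=0$ by Proposition~\eqref{pr:42}(c), and then says the same for $g:H\longrightarrow{N}$. Your version merely spells out the bookkeeping (first isomorphism theorem, the role of $N\cap{H}=0$) in more detail.
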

\begin{proof}
For any homomorphism $f:N\longrightarrow{H}$ we have $\Ima(f)$ is a common subfactor of $N$ and $H$, hence $\Ima(f)=0$, and $f=0$. The same happens for any homomorphism $g:H\longrightarrow{N}$.
\end{proof}

Finally, we observe that distributive submodules are preserved by some module constructions.

\begin{proposition}
Let $M$ be a right $R$--module, the following statements hold:
\begin{enumerate}[(1)]\sepa
\item
If $N\subseteq{M}$ is a distributive submodule, for any submodule $H\subseteq{M}$ the submodule $(N+H)/H\subseteq{M/H}$ is distributive.
\item
For every family of distributive submodules $\{N_i\subseteq{M}\mid\;i\in{I}\}$ the sum $\sum_iN_i\subseteq{M}$ is a distributive submodule.
\item
If $N_1,N_2\subseteq{M}$ are distributive submodules, then $N_1\cap{N_2}\subseteq{M}$ is distributive.
\end{enumerate}
\end{proposition}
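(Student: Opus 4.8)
The plan is to obtain (1) directly from the definition of a distributive submodule, and (2) and (3) from the subfactor characterization, condition (d), of Proposition~\eqref{pr:42}.

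For (1): a pair of submodules of $M/H$ has the form $Y_1/H$, $Y_2/H$ with $H\subseteq Y_1\cap Y_2$. Using only that $H\subseteq Y_i$, one computes inside $M/H$ that $(Y_1/H\cap Y_2/H)+(N+H)/H=\bigl((Y_1\cap Y_2)+N\bigr)/H$ and $\bigl(Y_1/H+(N+H)/H\bigr)\cap\bigl(Y_2/H+(N+H)/H\bigr)=\bigl((Y_1+N)\cap(Y_2+N)\bigr)/H$; by the correspondence theorem these two submodules of $M/H$ coincide if and only if $(Y_1\cap Y_2)+N=(Y_1+N)\cap(Y_2+N)$ holds in $M$, which is an instance of the distributivity of $N$. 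Hence $(N+H)/H$ is distributive.

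For (2) and (3) the engine is the following remark: \emph{if $A$ and $B$ are submodules of a module $M'$, then every simple subfactor of $A+B$ is a subfactor of $A$ or of $B$.} Indeed, such a subfactor is a homomorphic image $S$ of some submodule $L\subseteq A+B$; setting $L_0=L\cap A$ one has $L_0\subseteq A$ and $L/L_0\cong(L+A)/A\subseteq(A+B)/A\cong B/(A\cap B)$, so a surjection $L\to S$ either restricts to a surjection $L_0\to S$ (and $S$ is a subfactor of $A$) or annihilates $L_0$ and factors through a surjection $L/L_0\to S$ (and $S$ is a subfactor of $B$). By induction, every simple subfactor of a finite sum $A_1+\cdots+A_k$ is a subfactor of some $A_j$; and since a simple subfactor $S$ of an arbitrary sum $\sum_iA_i$ is a homomorphic image of a cyclic submodule $cR\subseteq\sum_iA_i$ with $c$ lying in some finite sub-sum, $S$ is a subfactor of some $A_i$ as well.

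Granting this, (2) runs as follows. Put $N=\sum_iN_i$ and fix $H\subseteq M$; then $N/(N\cap H)\cong(N+H)/H=\sum_i\bigl((N_i+H)/H\bigr)$. A non-zero simple subfactor $S$ common to $N/(N\cap H)$ and $H/(N\cap H)$ would, by the remark, be a subfactor of $(N_i+H)/H\cong N_i/(N_i\cap H)$ for some $i$; since $N_i\cap H\subseteq N\cap H$, the module $H/(N\cap H)$ is a quotient of $H/(N_i\cap H)$, so $S$ is also a subfactor of $H/(N_i\cap H)$, contradicting the distributivity of $N_i$ via condition (d) of Proposition~\eqref{pr:42}. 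Hence $N$ is distributive. For (3), put $K=N_1\cap N_2\cap H$. Since $(N_1\cap N_2)\cap(N_j\cap H)=K$, there are embeddings $(N_1\cap N_2)/K\hookrightarrow N_j/(N_j\cap H)$ for $j=1,2$, so every subfactor of $(N_1\cap N_2)/K$ is a subfactor of both $N_1/(N_1\cap H)$ and $N_2/(N_2\cap H)$; and since $K=(N_1\cap H)\cap(N_2\cap H)$, there is an embedding $H/K\hookrightarrow H/(N_1\cap H)\times H/(N_2\cap H)$, and applying the remark to the two summands of this product shows that every simple subfactor of $H/K$ is a subfactor of $H/(N_1\cap H)$ or of $H/(N_2\cap H)$. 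A common non-zero simple subfactor of $(N_1\cap N_2)/K$ and $H/K$ would thus be, for some $j$, a common simple subfactor of $N_j/(N_j\cap H)$ and $H/(N_j\cap H)$, contradicting the distributivity of $N_j$; so $N_1\cap N_2$ is distributive by condition (d) of Proposition~\eqref{pr:42}. The one point needing care — and hence the main obstacle — is the displayed remark together with its two reductions (the inductive step to finite sums, and the cyclic-submodule argument reducing the infinite case to a finite one); everything else is bookkeeping with the isomorphism theorems.
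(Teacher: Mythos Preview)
Your argument is correct. It differs from the paper's proof in two respects. For (1), the paper invokes the elementwise criterion (b) of Proposition~\eqref{pr:42}, checking directly that $(N:m)\subseteq\bigl((N+H)/H:\overline{m}\bigr)$ and $(mR:n)\subseteq(\overline{m}R:\overline{n})$, whereas you verify the defining lattice identity head--on; both are short and equally natural. For (2) and (3), the paper simply cites the classical lattice--theoretic fact that in a modular lattice the distributive (equivalently neutral) elements are closed under finite joins and meets, and then waves at the infinite case; you instead run everything through criterion (d) of Proposition~\eqref{pr:42} and the key remark that a simple subfactor of a sum is a subfactor of a summand. Your route is longer but entirely self--contained and module--theoretic, and it handles the infinite sum in (2) explicitly via the cyclic--submodule reduction rather than leaving it as ``not difficult''. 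The lattice--theoretic route has the virtue of brevity and of showing that (2) and (3) for finite families are really facts about modular lattices, independent of the module structure; your route makes visible exactly where the module theory (cyclic generation, simplicity) enters, which is in the passage from finite to arbitrary sums.
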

\begin{proof}
(1). %
Since $N\subseteq{M}$ is distributive, for any $m\in{M}$ and any $n\in{N}$ we have $(N:m)+(mR:n)=R$. The result follows from the following inclusions
\[
(N:m)\subseteq\left(\frac{N+H}{H}:\overline{m}\right)\mbox{ and }
(mR:n)\subseteq(\overline{m}R:\overline{n}),
\]
where $\overline{x}=x+H$ for any $x\in{M}$.
\par
(2) and (3). %
They are well known for finite join and meet of distributive elements of a lattice. It is not difficult to see that in the case of sum it can be extended to the infinite case.
\end{proof}

If $A$ is a commutative ring and $\Sigma\subseteq{A}$ a multiplicatively closed subset, then we have:

\begin{proposition}
Let $M$ be an $A$--module; if $N\subseteq{M}$ is distributive, then $\Sigma^{-1}N\subseteq\Sigma^{-1}M$ is distributive.
\end{proposition}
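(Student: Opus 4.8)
The plan is to reduce to the element-wise criterion for distributivity established in Proposition~\eqref{pr:42}, condition (b): a submodule $P$ of an $A$-module $M'$ is distributive if and only if $(P:x)+(xA:p)=A$ for every $x\in M'$ and every $p\in P$ (here $R=A$ is commutative, so $xA=Ax$). So the goal is to verify, for each $m/s\in\Sigma^{-1}M$ and each $n/t\in\Sigma^{-1}N$, that $(\Sigma^{-1}N:m/s)+((m/s)\Sigma^{-1}A:n/t)=\Sigma^{-1}A$, given that $(N:m)+(mA:n)=A$ holds in $A$ for all $m\in M$, $n\in N$.

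First I would dispose of the denominators: since $s/1$ and $t/1$ are units in $\Sigma^{-1}A$, multiplication by them preserves every $\Sigma^{-1}A$-submodule of $\Sigma^{-1}M$, so $(\Sigma^{-1}N:m/s)=(\Sigma^{-1}N:m/1)$ and $((m/s)\Sigma^{-1}A:n/t)=((m/1)\Sigma^{-1}A:n/1)$. As every element of $\Sigma^{-1}N$ (resp.\ $\Sigma^{-1}M$) has the form $n/t$ (resp.\ $m/s$), it therefore suffices to treat the case of denominator $1$. Next I would show that localization commutes with the relevant colon ideals: $\Sigma^{-1}(N:m)=(\Sigma^{-1}N:m/1)$ and $\Sigma^{-1}(mA:n)=((m/1)\Sigma^{-1}A:n/1)$. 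For the first, apply the exact functor $\Sigma^{-1}(-)$ to the exact sequence $0\to(N:m)\to A\xrightarrow{\,a\mapsto am+N\,}M/N$: the kernel $(N:m)$ localizes to the kernel of $a/s\mapsto (am/s)+\Sigma^{-1}N$, which is exactly $(\Sigma^{-1}N:m/1)$. The second identity is the same argument with $N$ replaced by $mA$ (hence $M/N$ by $M/mA$).

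Then I conclude: localization commutes with finite sums of submodules, so
\[
\Sigma^{-1}A=\Sigma^{-1}\big((N:m)+(mA:n)\big)=\Sigma^{-1}(N:m)+\Sigma^{-1}(mA:n)=(\Sigma^{-1}N:m/1)+((m/1)\Sigma^{-1}A:n/1),
\]
and by the first step this equals $(\Sigma^{-1}N:m/s)+((m/s)\Sigma^{-1}A:n/t)$ for arbitrary $s,t\in\Sigma$. By Proposition~\eqref{pr:42}(b), $\Sigma^{-1}N\subseteq\Sigma^{-1}M$ is distributive.

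An alternative route, which avoids Proposition~\eqref{pr:42} entirely, is to work straight from the definition: every submodule of $\Sigma^{-1}M$ has the form $\Sigma^{-1}Y$ for some $Y\subseteq M$ (take $Y$ the preimage of the submodule under $M\to\Sigma^{-1}M$), and $\Sigma^{-1}(-)$ commutes with sums and with finite intersections of submodules; hence the identity $(Y_1\cap Y_2)+N=(Y_1+N)\cap(Y_2+N)$ localizes verbatim to $(\Sigma^{-1}Y_1\cap\Sigma^{-1}Y_2)+\Sigma^{-1}N=(\Sigma^{-1}Y_1+\Sigma^{-1}N)\cap(\Sigma^{-1}Y_2+\Sigma^{-1}N)$. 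Either way there is no real obstacle; the only points that need care are the standard facts that localization is exact and commutes with finite intersections of submodules, together with the unit trick that lets one ignore denominators.
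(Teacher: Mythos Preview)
Your argument is correct and follows essentially the same route as the paper: both apply Proposition~\eqref{pr:42}(b) and use the identities $\Sigma^{-1}(N:m)=(\Sigma^{-1}N:\tfrac{m}{1})$ and $\Sigma^{-1}(mA:n)=(\tfrac{m}{1}\Sigma^{-1}A:\tfrac{n}{1})$ to transport $(N:m)+(mA:n)=A$ to the localization. You are in fact more careful than the paper, which checks only elements with denominator $1$ and leaves the unit-trick reduction implicit; your alternative direct-from-definition argument is a valid extra route the paper does not pursue.
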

\begin{proof}
We apply Proposition~(\ref{pr:42}(b)). Let $\frac{m}{1}\in\Sigma^{-1}M$, and $\frac{n}{1}\in\Sigma^{-1}N$. We have the equalities:
\[
\begin{array}{ll}
(\Sigma^{-1}N:\frac{m}{1})=\{\frac{a}{s}\in\Sigma^{-1}A\mid\;\textrm{exists  }t\in\Sigma,\textrm{ such that }mat\in{N}\}=\Sigma^{-1}(N:m),\textrm{ and}\\\\
(\frac{m}{1}\Sigma^{-1}A:\frac{n}{1})=\Sigma^{-1}(mA:n).
\end{array}
\]
Since $(N:m)+(mA:n)=A$, then $(\Sigma^{-1}N:\frac{m}{1})+(\frac{m}{1}\Sigma^{-1}A:\frac{n}{1})=\Sigma^{-1}A$, and $\Sigma^{-1}N\subseteq\Sigma^{-1}M$ is distributive.
\end{proof}

In particular, if $\ideal{p}\subseteq{A}$ is a prime ideal, and consider $\Sigma=A\setminus\ideal{p}$, then we have:

\begin{corollary}
Let $M$ be an $A$--module and $\ideal{p}\subseteq{A}$ be a prime ideal. If $N\subseteq{M}$ is distributive, then $N_\ideal{p}\subseteq{M_\ideal{p}}$ is distributive.
\end{corollary}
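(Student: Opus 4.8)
The plan is to deduce this immediately from the preceding Proposition by specializing the multiplicatively closed set. First I would observe that since $\ideal{p}\subseteq{A}$ is prime, its complement $\Sigma=A\setminus\ideal{p}$ is a multiplicatively closed subset of $A$ (it contains $1$ and is closed under products precisely because $\ideal{p}$ is a prime ideal). This is the only point that uses the hypothesis that $\ideal{p}$ is prime rather than an arbitrary ideal, and it is routine.

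Next I would recall that localization at the prime $\ideal{p}$ is, by definition, localization at $\Sigma$: one has $M_\ideal{p}=\Sigma^{-1}M$ and $N_\ideal{p}=\Sigma^{-1}N$, and the inclusion $N_\ideal{p}\subseteq{M_\ideal{p}}$ is the image of $N\subseteq{M}$ under the (exact) localization functor, so it is the same submodule $\Sigma^{-1}N$ of $\Sigma^{-1}M$ considered in the previous Proposition. Applying that Proposition to $\Sigma=A\setminus\ideal{p}$ then gives that $\Sigma^{-1}N\subseteq\Sigma^{-1}M$ is distributive, i.e.\ $N_\ideal{p}\subseteq{M_\ideal{p}}$ is distributive, which is exactly the claim.

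There is no real obstacle here; the corollary is a direct instance of the general statement, the only substantive input being the standard fact that the complement of a prime ideal is multiplicatively closed. A one-line proof reading ``Apply the previous Proposition to $\Sigma=A\setminus\ideal{p}$, which is multiplicatively closed since $\ideal{p}$ is prime, and note $\Sigma^{-1}N=N_\ideal{p}$, $\Sigma^{-1}M=M_\ideal{p}$.'' suffices.
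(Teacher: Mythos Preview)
Your proposal is correct and matches the paper's approach exactly: the paper presents this corollary without a formal proof, simply prefacing it with ``In particular, if $\ideal{p}\subseteq{A}$ is a prime ideal, and consider $\Sigma=A\setminus\ideal{p}$, then we have:''---which is precisely the specialization you describe. Your one-line summary is essentially what the paper does.
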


Let us consider the following example.

\begin{example}
Let $M=\mathbb{Z}_8$ the cyclic abelian group of eight elements. Since $\mathcal{L}(\mathbb{Z}_8)$ is a distributive lattice, then every submodule is distributive, hence $\Soc(\mathbb{Z}_8)=4\mathbb{Z}_8\subsetneqq2\mathbb{Z}_8$ are proper distributive submodules. Otherwise, $\mathbb{Z}_8$ has no nontrivial direct summands, hence $\mathcal{L}(\mathbb{Z}_8)$ has no a lattice decomposition, see next section.
\end{example}

This means that the existence of distributive submodules does not imply a lattice decomposition. On the other hand, for every lattice decomposition $M=M_1\oplus{M_2}$ we shall prove that $M_1$ and $M_2$ are distributive submodules.

\section{Complemented distributive submodules}\label{se:5}

Let $M=N\oplus{H}$ be a decomposition, and let us denote $i_1:N\longrightarrow{M}$ and $i_2:H\longrightarrow{M}$ the inclusions and $q_1:M\longrightarrow{N}$, $q_2:M\longrightarrow{H}$ the projections. If $p_1:M\longrightarrow{M/N}$ is the projection, there is an isomorphism $f:M/N\cong{H}$ such that $fp_1=q_2$; thus, we have a diagram involving the lattices:
\[
\xy
 (0,0)*+{\mathcal{L}(N)}="a",
 (30,0)*+{\mathcal{L}(M)}="b",
 (60,0)*+{\mathcal{L}(H)}="c",
 \ar@{->}"a";"b"^{i_{1*}}
 \ar@/^+5ex/"b";"c"^{i_2^*}
 \ar@{->}"c";"b"_{i_{2*}}
 \ar@/^+5ex/"b";"a"_{i_1^*}
\endxy
\]
Being $i_{1*}$ and $i_{2*}$ lattice homomorphisms. On the other hand, $i_1^*$ and $i_2^*$ are \textbf{$\wedge$--homomor\-phisms}, i.e., $i_j^*(Y_1\wedge{Y_2})=i_j^*(Y_1)\wedge{i_j^*(Y_2)}$, for every $Y_1,Y_2\subseteq{M}$, $j=1,2$, and they are lattice homomorphisms whenever $N$, or equivalently $H$, is a distributive submodule. In this case, $\mathcal{L}(M)$ is the direct product of $\mathcal{L}(N)$ and $\mathcal{L}(H)$; and for every $Y\subseteq{M}$ we have $Y=i_{1*}i_1^*(Y)\vee{i_{2*}i_2^*(Y)}$.

The first result is a direct consequence of the characterizations of distributive submodules, in Proposition~\eqref{pr:42}, which are direct summands, i.e., \textbf{complemented distributive submodules}.

\begin{lemma}\label{le:1510061641}
Let $M=N\oplus{H}$ be a direct sum, the following statements are equivalent:
\begin{enumerate}[(a)]\sepa
\item
$N\subseteq{M}$ is distributive.
\item
$N$ and $H$ have no isomorphic simple subfactors.
\item
$\Ann(n)+\Ann(h)=R$ for any $n\in{N}$ and $h\in{H}$.
\item
For any submodule $X\subseteq{M}$ we have $X=(X\cap{N})+(X\cap{H})=(X+N)\cap(X+H)$.
\item
$H\subseteq{M}$ is distributive.
\end{enumerate}
\end{lemma}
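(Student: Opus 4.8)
The plan is to prove the cycle of implications (a) $\Rightarrow$ (b) $\Rightarrow$ (c) $\Rightarrow$ (d) $\Rightarrow$ (a) and then note that (a) $\Leftrightarrow$ (e) by the symmetry of the hypothesis $M=N\oplus H$. Most of the work is already packaged in Proposition~\eqref{pr:42}, so the proof amounts to translating its intrinsic conditions into the statements involving $N$ and $H$ under the extra assumption that $N$ is a direct summand.

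First I would handle (a) $\Leftrightarrow$ (b). Since $M=N\oplus H$, the submodule $H$ plays the role of a complement, and for \emph{this particular} $H$ we have $N\cap H=0$, so $N/(N\cap H)=N$ and $H/(N\cap H)=H$. Thus condition (d) of Proposition~\eqref{pr:42}, applied with this $H$, says exactly that $N$ and $H$ have no isomorphic simple subfactors; this gives (a) $\Rightarrow$ (b) immediately. For the converse (b) $\Rightarrow$ (a) I would invoke part (c)/(d) of Proposition~\eqref{pr:42}: one must check that for an \emph{arbitrary} submodule $X\subseteq M$, the modules $N/(N\cap X)$ and $X/(N\cap X)$ share no simple subfactors. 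The point is that $N/(N\cap X)$ embeds, via the projection $q_2:M\to H$, into $H$ (its kernel restricted to $N$ is $N\cap H=0$, and $N\cap X$ maps into... more carefully: $q_1$ restricted to $X$ has kernel $X\cap H$, so $X/(X\cap H)\hookrightarrow N$; and one checks $X/(N\cap X)$ is a subfactor of $X/(X\cap H)$ only after comparing the two, so instead I would argue that any simple subfactor of $N/(N\cap X)$ is a subfactor of $N$, and any simple subfactor of $X/(N\cap X)$ is a subfactor of $X$, hence of $M=N\oplus H$, hence by Jordan--Hölder-type reasoning a subfactor of $N$ or of $H$). If it is a common simple subfactor lying in $N$ on one side and in $H$ on the other, that contradicts (b); if it lies in $N$ on both sides one shows the subfactor of $N/(N\cap X)$ is nonzero only when... — this is the step that needs care. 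The cleanest route is: $N/(N\cap X) \cong (N+X)/X \subseteq M/X$, and $M/X = (N+X)/X \oplus (H+X)/X$ need not hold, but $M/X$ is a quotient of $M$, so every simple subfactor of $M/X$ is a simple subfactor of $N$ or of $H$; combining with the fact that $X/(N\cap X)$ also has all simple subfactors among those of $M$, a shared simple subfactor would force a common simple subfactor of $N$ and $H$ unless it is a common subfactor of $N$ with itself — and in that last case one uses that $N/(N\cap X)$ and $X/(N\cap X)$ are disjoint pieces inside...

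The cleanest argument actually uses (c) of Proposition~\eqref{pr:42} directly together with the complement: given a common simple subfactor $U$ of $N/(N\cap X)$ and $X/(N\cap X)$, note $X/(N\cap X) = (X+N)/N \cong$ a submodule of $M/N \cong H$ (using $p^*$ and the iso $M/N\cong H$), so $U$ is a simple subfactor of $H$; and $U$ is a simple subfactor of $N$; this contradicts (b). This is the key observation — that passing to the quotient by $N$ identifies subfactors of any $X/(N\cap X)$ with subfactors of $H$ — and it is where the direct-summand hypothesis is really used. Granting this, (b) $\Rightarrow$ (a) follows from Proposition~\eqref{pr:42}(d) $\Rightarrow$ (a).

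Next, (a) $\Leftrightarrow$ (c) is just Proposition~\eqref{pr:42}(a) $\Leftrightarrow$ (b) rewritten: for $n\in N$ and $m=h\in H$ we have $(N:h) = \Ann(h)$ because $hR\cap N=0$ forces $hr\in N \iff hr=0$; and $(hR:n)=\Ann(n)$ similarly since $n\in N$ and $hR\subseteq H$ give $nr\in hR \iff nr\in N\cap H=0 \iff nr=0$. Conversely one must see that condition (b) of Proposition~\eqref{pr:42} for \emph{all} $m\in M$ follows from the special case $m\in H$: write $m=n'+h$ with $n'\in N$, $h\in H$, and observe $(N:m)\supseteq(N:h)$ while $(mR:n)\supseteq\;?$ — this needs the identity $(mR:n)=(hR:n)$, which holds because $nr\in mR$ with $mr' = nr$ projects under $q_1$ to $n'r' = 0$... hmm, one gets $n'r'=nr\cdot 0$, so actually $nr = q_2(mr') = hr'$, giving $nr\in hR$; the reverse inclusion is clear. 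So (c) $\Rightarrow$ (b-of-\eqref{pr:42}) $\Rightarrow$ (a). Then (a) $\Rightarrow$ (d) is immediate from the distributivity identities $(Y_1+Y_2)\cap N = (Y_1\cap N)+(Y_2\cap N)$ and its dual, taking $Y_1=X$, $Y_2=H$ (resp. $Y_2=N$) and using $M=N\oplus H$; and (d) $\Rightarrow$ (a) is trivial since (d) for arbitrary $X$ with the dual form is precisely the defining distributivity condition restricted to the generating pairs, which by \cite[Theorem III.2.6]{GRATZER:1978} suffices — or more directly, $X=(X\cap N)+(X\cap H)$ for all $X$ is exactly condition (4) of Lemma~\eqref{le:20200802}'s analogue. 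Finally, (a) $\Leftrightarrow$ (e) holds by interchanging the roles of $N$ and $H$, which the symmetric conditions (b), (c), (d) visibly respect. The main obstacle is the bookkeeping in (b) $\Rightarrow$ (a): making rigorous that a simple subfactor of $X/(N\cap X)$ is always a simple subfactor of $H$, for which the identification $X/(N\cap X)\cong (X+N)/N$ and the isomorphism $M/N\cong H$ do the job.
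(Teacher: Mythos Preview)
Your approach is the paper's: the lemma is stated there as ``a direct consequence of the characterizations of distributive submodules in Proposition~\eqref{pr:42}'' with no further proof, and you are supplying exactly that translation, so the strategy matches. One small correction to your (c)$\Rightarrow$(a) step: the equality $(mR:n)=(hR:n)$ that you try to establish is not true in general---your projection computation actually gives $q_2(nr)=0=hr'$, hence $nr=n'r'\in n'R$, not $nr\in hR$---but you do not need it: since $0\in mR$ one has trivially $\Ann(n)\subseteq(mR:n)$, and together with the equality $(N:m)=\Ann(h)$ (which you did verify) this already yields $(N:m)+(mR:n)\supseteq\Ann(h)+\Ann(n)=R$.
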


If $M$ is a right $R$--module satisfying the equivalent statements in the above lemma we say that $M=N\oplus{H}$ is a \textbf{lattice decomposition} of $M$.

In this context, if $A$ is a commutative ring, we have the following result that characterizes complemented distributive submodules.

\begin{corollary}\label{co:20200806}
Let $A$ be a commutative ring, $M$ an $A$--module, and $M=N\oplus{H}$ be a decomposition in a direct sum, the following statements are equivalent:
\begin{enumerate}[(a)]\sepa
\item
$N\subseteq{M}$ is a distributive submodule.
\item
$\Supp(N)\cap\Supp(H)=\varnothing$.
\item
$H\subseteq{M}$ is a distributive submodule.
\end{enumerate}
\end{corollary}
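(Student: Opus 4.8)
The plan is to reduce this to Lemma~\eqref{le:1510061641}, which already characterizes complemented distributive submodules in terms of the absence of isomorphic simple subfactors between $N$ and $H$. Since $(a)\Leftrightarrow(c)$ is built into that lemma, the real content is the equivalence $(a)\Leftrightarrow(b)$, and I would prove it in the form $(b)\Leftrightarrow$ ``$N$ and $H$ have no isomorphic simple subfactors.''

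First I would recall the basic facts about support over a commutative ring $A$: for a simple module, $\Supp(A/\ideal{m})=\{\ideal{m}\}$ for a maximal ideal $\ideal{m}$; every simple subfactor of a module $L$ has the form $A/\ideal{m}$ with $\ideal{m}\in\Supp(L)$; and conversely, if $\ideal{m}\in\Supp(L)$ then $L_\ideal{m}\neq0$, and (at least when $L$ is finitely generated, and in general after passing to a suitable cyclic subfactor) there is a simple subfactor isomorphic to $A/\ideal{m}$. More precisely, the clean statement I want is: $A/\ideal{m}$ is a subfactor of $L$ if and only if $\ideal{m}\in\Supp(L)$. The forward direction is immediate since support is closed under subfactors and $\Supp(A/\ideal{m})=\{\ideal{m}\}$. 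For the reverse direction, $\ideal{m}\in\Supp(L)$ means there is $x\in L$ with $(\Ann(x))_\ideal{m}\neq A_\ideal{m}$, i.e. $\Ann(x)\subseteq\ideal{m}$; then $xA\cong A/\Ann(x)$ surjects onto $A/\ideal{m}$, exhibiting $A/\ideal{m}$ as a subfactor of $L$.

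Granting this, the argument is short. If $N$ and $H$ share an isomorphic simple subfactor, it is isomorphic to some $A/\ideal{m}$, and then $\ideal{m}\in\Supp(N)\cap\Supp(H)$, so the intersection is nonempty; contrapositively $(b)\Rightarrow$ no common simple subfactor, which gives $(b)\Rightarrow(a)$ via the lemma. Conversely, if $\ideal{p}\in\Supp(N)\cap\Supp(H)$, pick a maximal ideal $\ideal{m}\supseteq\ideal{p}$; since support is closed upward under specialization only in the finitely generated case, I would instead argue directly: localize at $\ideal{p}$, use that $N_\ideal{p}$ and $H_\ideal{p}$ are both nonzero $A_\ideal{p}$-modules, and the unique simple $A_\ideal{p}$-module $A_\ideal{p}/\ideal{p}A_\ideal{p}$ is a subfactor of each of them; pulling back along the localization map $N\to N_\ideal{p}$ and $H\to H_\ideal{p}$ produces in $N$ and in $H$ a subfactor isomorphic to $A/\ideal{p}$ (a cyclic module $xA$ with $\ideal{p}\subseteq\Ann(x)$ and $\Ann(x)_\ideal{p}=\ideal{p}A_\ideal{p}$, whence $xA/x\ideal{p}\cong A/\ideal{p}$ after possibly enlarging to the radical). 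This common simple subfactor $A/\ideal{m}$ (taking any maximal $\ideal{m}$ over $\ideal{p}$ in the quotient $A/\ideal{p}$ construction) contradicts $(a)$ by the lemma, giving $(a)\Rightarrow(b)$.

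The main obstacle I anticipate is the ``$\Supp$ nonempty intersection $\Rightarrow$ common \emph{simple} subfactor'' direction: support being nonzero at $\ideal{p}$ guarantees a simple subfactor over the local ring $A_\ideal{p}$, but transporting this back to an honest simple $A$-subfactor of $N$ and of $H$ needs care, since a nonzero localization need not come from a simple subfactor upstairs unless one first extracts a suitable cyclic submodule and then a maximal submodule of it. The cleanest route is: $\ideal{p}\in\Supp(N)$ gives $n\in N$ with $\Ann(n)\subseteq\ideal{p}$, so $nA$ has $A/\ideal{p}$ as a quotient, hence has $A/\ideal{m}$ as a subfactor for every maximal $\ideal{m}\supseteq\ideal{p}$; do the same in $H$; choosing one such $\ideal{m}$ produces the shared simple subfactor. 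Everything else is a direct citation of Lemma~\eqref{le:1510061641} and the elementary behaviour of $\Supp$ under subfactors.
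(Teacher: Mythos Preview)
Your proof is correct and follows the same overall strategy as the paper: reduce everything to Lemma~\eqref{le:1510061641} and translate between $\Supp$ and annihilators of elements. The one difference worth noting is in the direction $(a)\Rightarrow(b)$. You work through the simple--subfactor characterization (part (b) of the lemma), producing from a prime $\ideal{p}\in\Supp(N)\cap\Supp(H)$ elements $n\in N$, $h\in H$ with $\Ann(n),\Ann(h)\subseteq\ideal{p}$, then passing to a maximal $\ideal{m}\supseteq\ideal{p}$ to exhibit $A/\ideal{m}$ as a common simple subfactor. The paper instead invokes the annihilator characterization (part (c) of the lemma): the same $n,h$ already give $\Ann(n)+\Ann(h)\subseteq\ideal{p}\neq A$, contradicting $\Ann(n)+\Ann(h)=A$ in one line, with no need to manufacture a simple subfactor or choose a maximal ideal above $\ideal{p}$. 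Your localization detour is, as you yourself flag, unnecessary and a bit shaky; the ``cleanest route'' you give at the end is exactly right and is all you need for your version of the argument.
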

\begin{proof}
(a) $\Rightarrow$ (b). %
By Lemma~\eqref{le:1510061641}, for any $n\in{N}$ and any $h\in{H}$ we have $\Ann(n)+\Ann(h)=A$. If $\ideal{p}\in\Supp(N)\cap\Supp(H)$, there exist $n\in{N}$ and $h\in{H}$ such that $(Rn)_\ideal{p}\neq0$, and $(Rh)_\ideal{p}\neq0$, hence $\Ann(n)\subseteq\ideal{p}$ and $\Ann(h)\subseteq\ideal{p}$, which is a contradiction.
\par
(b) $\Rightarrow$ (a). %
If $N$ is not distributive, by Lemma~\eqref{le:1510061641}, $N$ and $H$ have isomorphic simple subfactors, hence $\Supp(N)\cap\Supp(H)\neq\varnothing$, which is a contradiction.
\end{proof}

As a consequence, if in addition $A$ is a noetherian ring, then lattice decomposition is inherited by injective hulls.

\begin{corollary}\label{co:20200806b}
Let $A$ be a commutative noetherian ring, $M$ an $A$--module, and $M=N\oplus{H}$ be a lattice decomposition, then $E(M)=E(N)\oplus{E(H)}$ is a lattice decomposition.
\end{corollary}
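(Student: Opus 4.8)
The plan is to combine the support characterization of lattice decompositions from Corollary~\eqref{co:20200806} with standard facts about associated primes and injective hulls over a commutative noetherian ring. Since $M = N \oplus H$ is a lattice decomposition, Corollary~\eqref{co:20200806} gives $\Supp(N) \cap \Supp(H) = \varnothing$. We certainly have $E(M) = E(N) \oplus E(H)$ as a direct sum decomposition of injective modules (the injective hull of a finite direct sum is the direct sum of the injective hulls, since a finite direct sum of injectives is injective and essentiality is preserved). So the only thing to prove is that this direct sum is again a \emph{lattice} decomposition, i.e.\ that $E(N)$ is a distributive submodule of $E(M)$; by Corollary~\eqref{co:20200806} again it suffices to show $\Supp(E(N)) \cap \Supp(E(H)) = \varnothing$.

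The key step is to control $\Supp(E(N))$ in terms of $\Supp(N)$. Over a commutative noetherian ring, the injective hull $E(N)$ decomposes as a direct sum of indecomposable injectives $E(A/\ideal{p})$ with $\ideal{p}$ ranging (with multiplicity) over $\mathrm{Ass}(N)$, and $\Supp(E(A/\ideal{p})) = V(\ideal{p}) = \{\ideal{q} \mid \ideal{p}\subseteq\ideal{q}\}$. Hence
\[
\Supp(E(N)) = \bigcup_{\ideal{p}\in\mathrm{Ass}(N)} V(\ideal{p}).
\]
Since $\mathrm{Ass}(N) \subseteq \Supp(N)$ and $\Supp(N)$ is closed under specialization (in fact, for noetherian rings $\Supp(N) = \bigcup_{\ideal{p}\in\mathrm{Ass}(N)} V(\ideal{p})$ when $N$ is, say, finitely generated, and in general $\mathrm{Ass}(N)\subseteq\Supp(N)$ with $V(\ideal{p})\subseteq\Supp(N)$ for each associated prime), we get $\Supp(E(N)) \subseteq \Supp(N)$; and symmetrically $\Supp(E(H)) \subseteq \Supp(H)$. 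Therefore $\Supp(E(N)) \cap \Supp(E(H)) \subseteq \Supp(N) \cap \Supp(H) = \varnothing$, and Corollary~\eqref{co:20200806} shows $E(M) = E(N) \oplus E(H)$ is a lattice decomposition.

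The main obstacle is the inclusion $\Supp(E(N)) \subseteq \Supp(N)$ for an arbitrary (not necessarily finitely generated) module $N$ over a commutative noetherian ring. The clean way around this is to argue prime by prime: if $\ideal{p} \notin \Supp(N)$, then $N_\ideal{p} = 0$, so $\ideal{p} \notin \mathrm{Ass}(N)$, and moreover no associated prime of $N$ is contained in $\ideal{p}$ (localizing at a contained prime would already kill a nonzero element); using the Matlis decomposition $E(N) \cong \bigoplus E(A/\ideal{q})$ over $\ideal{q}\in\mathrm{Ass}(N)$ and $\Supp(E(A/\ideal{q})) = V(\ideal{q})$, we conclude $\ideal{p}\notin\Supp(E(N))$. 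This is where noetherianness is essential — it is what makes the Matlis structure theorem available and guarantees that an injective is a direct sum (not merely a directed union) of indecomposable injectives, each with a well-behaved support. An alternative, avoiding Matlis theory, is to use that $E(N)$ is an essential extension of $N$, note essential extensions are preserved by localization (exactness of $(-)_\ideal{p}$ plus essentiality), so $N_\ideal{p} = 0$ forces $E(N)_\ideal{p}$ to have zero essential submodule, hence $E(N)_\ideal{p} = 0$; this gives $\Supp(E(N)) \subseteq \Supp(N)$ directly and is probably the shortest route.
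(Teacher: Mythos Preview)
Your proof is correct and follows the same strategy as the paper: reduce to the support criterion of Corollary~\eqref{co:20200806} and use that, over a commutative noetherian ring, passing to the injective hull does not enlarge the support. The paper simply invokes the equality $\Supp(N)=\Supp(E(N))$ as a well--known fact, whereas you supply arguments for the needed inclusion $\Supp(E(N))\subseteq\Supp(N)$; the Matlis--decomposition route you give is a standard and complete justification. One small caveat on your alternative route: the parenthetical ``exactness of $(-)_\ideal{p}$ plus essentiality'' is not by itself enough to conclude that localization preserves essential extensions---this really does use noetherianness (e.g.\ via the fact that $E(N)_\ideal{p}$ is the $A_\ideal{p}$--injective hull of $N_\ideal{p}$), so if you keep that variant you should cite the appropriate result rather than suggesting it is formal.
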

\begin{proof}
It is a direct consequence of the well known fact that $\Supp(N)=\Supp(E(N))$ for any $A$--module $N$.
\end{proof}

Also we have the following straightforward result.

\begin{lemma}
Let $M$ be a right $R$--module such that $\mathcal{L}(M)$ is a direct product of two lattices, say $\mathcal{L}(M)=\mathcal{L}_1\times\mathcal{L}_2$, there exist $M_1,M_2\subseteq{M}$ such that
\begin{enumerate}[(1)]\sepa
\item
$M=M_1\oplus{M_2}$.
\item
$M_1$ and $M_2$ are distributive submodules.
\item
$\mathcal{L}_i\cong\mathcal{L}(M_i)$, for every $i=1,2$.
\item
$\mathcal{L}(M)=[0,M_1]\times[0,M_2]$.
\item
There exists an idempotent endomorphism $e\in\End_R(M)$ such that $e(M)=M_1$, and $(1-e)(M)=M_2$. In addition, $e_{\mid{M_1}}=\id_{M_1}$, and $(1-e)_{\mid{M_2}}=\id_{M_2}$.
\end{enumerate}
\end{lemma}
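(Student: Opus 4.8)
The plan is to run this argument entirely through Lemma~\eqref{le:20200802}, which has already done the hard work of translating a lattice product decomposition into a direct product of modules. First I would apply Lemma~\eqref{le:20200802} to the given decomposition $\mathcal{L}(M)=\mathcal{L}_1\times\mathcal{L}_2$, writing $M=(M_1,M_2)$ and $0=(0_1,0_2)$ in coordinates, and set $M_1$ and $M_2$ to be the submodules of $M$ identified there (the images of $M_1$ and $M_2$ under the embeddings $q_1,q_2$, i.e.\ the submodules corresponding to $(M_1,0_2)$ and $(0_1,M_2)$). That lemma already yields $M=M_1\times M_2$, hence (1) $M=M_1\oplus M_2$, and it provides the lattice isomorphisms $\mathcal{L}_i\cong\mathcal{L}(M_i)$, giving (3); moreover the interval identifications in parts \eqref{it:le:20200802-3}--\eqref{it:le:20200802-5} of the lemma, together with the product structure, give $\mathcal{L}(M)\cong[0,M_1]\times[0,M_2]$ under $Y\mapsto(Y\cap M_1,\,Y\cap M_2)$, which is (4).

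Next I would establish (2): since $M=M_1\oplus M_2$ is a decomposition and $\mathcal{L}(M)\cong\mathcal{L}(M_1)\times\mathcal{L}(M_2)$ as lattices, every submodule $Y\subseteq M$ satisfies $Y=(Y\cap M_1)\oplus(Y\cap M_2)$; this is precisely condition (d) of Lemma~\eqref{le:1510061641} (the $(Y\cap N)+(Y\cap H)$ half, and the dual $(Y+N)\cap(Y+H)$ half follows symmetrically from the same lattice isomorphism applied to the coordinatewise meet/join formulas). Hence by Lemma~\eqref{le:1510061641} the submodule $M_1$ — and equivalently $M_2$ — is distributive, which is (2). One should be slightly careful to argue that the lattice isomorphism furnished by Lemma~\eqref{le:20200802} is the expected ``intersect with the coordinate submodule'' map so that the decomposition property of submodules genuinely follows; this is where I expect the only real friction, since it requires unwinding how the abstract projections $p_1,p_2$ interact with the concrete submodule lattice, but it is already implicit in the proof of Lemma~\eqref{le:20200802}.

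Finally, for (5): from $M=M_1\oplus M_2$ there is the standard idempotent $e\in\End_R(M)$ given by the projection onto $M_1$ along $M_2$, so $e(M)=M_1$ and $(1-e)(M)=M_2$, and by construction $e$ restricts to the identity on $M_1$ while $1-e$ restricts to the identity on $M_2$. No distributivity is needed for this last point; it is the generic fact about direct sum decompositions recalled in the Introduction. I would present the proof as: ``Apply Lemma~\eqref{le:20200802} to obtain $M_1,M_2$ with $M=M_1\oplus M_2$ and $\mathcal{L}_i\cong\mathcal{L}(M_i)$; the lattice isomorphism $\mathcal{L}(M)\cong\mathcal{L}(M_1)\times\mathcal{L}(M_2)$ shows every $Y\subseteq M$ splits as $(Y\cap M_1)\oplus(Y\cap M_2)$, so by Lemma~\eqref{le:1510061641} the $M_i$ are distributive and $\mathcal{L}(M)=[0,M_1]\times[0,M_2]$; the projection $e$ onto $M_1$ along $M_2$ gives the idempotent.'' The whole argument is short because all the structural content has been pushed into the two earlier lemmas.
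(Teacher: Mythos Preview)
Your proposal is correct and is precisely the argument the paper has in mind: the paper gives no proof at all, stating only ``Also we have the following straightforward result,'' so the intended justification is exactly the combination of Lemma~\eqref{le:20200802} (for items (1), (3), (4), (5)) and Lemma~\eqref{le:1510061641}(d) (for item (2)) that you outline. Your one flagged friction point --- that the abstract coordinates $(Y_1,Y_2)$ of $Y\in\mathcal{L}(M)$ agree with $(Y\cap M_1,\,Y\cap M_2)$ --- is resolved by the componentwise meet formula in the product lattice: $Y\cap M_1=(Y_1,Y_2)\wedge(M_1,0_2)=(Y_1,0_2)$ and similarly for $M_2$, whence $(Y\cap M_1)\vee(Y\cap M_2)=(Y_1,Y_2)=Y$.
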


The existence of a non trivial idempotent endomorphism in $\End_R(M)$ is necessary, but it is not sufficient to get a lattice decomposition, i. e., not every idempotent endomorphism $e\in\End_R(M)$ defines a lattice  decomposition of $\mathcal{L}(M)$. Let us illustrate it by some examples.

\begin{example}
Let us consider the abelian group $M=\mathbb{Z}_2\times\mathbb{Z}_2$. It is clear that $M$ has not non trivial distributive submodules, but $\End(M)$ has non trivial idempotents.
Indeed, the ring $\End(\mathbb{Z}_2\times\mathbb{Z}_2)=M_2(\mathbb{Z}_2)$ has six non trivial idempotent endomorphisms
$\begin{pmatrix}1&1\\0&0\end{pmatrix}$,
$\begin{pmatrix}1&0\\1&0\end{pmatrix}$,
$\begin{pmatrix}0&0\\1&1\end{pmatrix}$,
$\begin{pmatrix}0&1\\0&1\end{pmatrix}$,
$\begin{pmatrix}1&0\\0&0\end{pmatrix}$ and
$\begin{pmatrix}0&0\\0&1\end{pmatrix}$,
but no one of them defines a lattice decomposition.
\end{example}

\begin{example}
In the positive we have: %
If we consider the abelian group $M=\mathbb{Z}_2\times\mathbb{Z}_3$, then $\End(\mathbb{Z}_2\times\mathbb{Z}_3)\cong\End(\mathbb{Z}_2)\times\End(\mathbb{Z}_3)$; this ring decomposes, and there is a non trivial idempotent that produces a lattice decomposition of $M$.
\end{example}

The next example shows that in a lattice decomposable module not every idempotent endomorphism provides a lattice decomposition.

\begin{example}\label{ex:150916}
Let us consider the abelian group $M=\mathbb{Z}_2\times\mathbb{Z}_2\times\mathbb{Z}_3$. The lattice of all subgroups is:
\[
\resizebox{13.5cm}{!}{
\begin{xy}
\xymatrix{
&&&&&\langle{e_1,e_2,f}\rangle\ar@{-}[ld]\ar@{-}[d]\ar@{-}[rd]\\
&&&&\langle{e_1,f}\rangle&\langle{e_2,f}\rangle&\langle{e_1+e_2,f}\rangle\\
&{\boldsymbol{\langle{e_1,e_2}\rangle}}
    \ar@{-}[ld]\ar@{-}[d]\ar@{-}[rd]\ar@{-}[rrrruu]&&&&
    {\boldsymbol{\langle{f}\rangle}}\ar@{-}[lu]\ar@{-}[u]\ar@{-}[ru]\\
\langle{e_1}\rangle\ar@{-}[rrrruu]&\langle{e_2}\rangle\ar@{-}[rrrruu]&
    \langle{e_1+e_2}\rangle\ar@{-}[rrrruu]&&\\
&\langle{0}\rangle\ar@{-}[lu]\ar@{-}[u]\ar@{-}[ru]\ar@{-}[rrrruu]&&&&\\
}\end{xy}}
\]
The decomposition given by $N_1={\boldsymbol{\langle{e_1,e_2}\rangle}}$, $N_2={\boldsymbol{\langle{f}\rangle}}$ corresponds to the idempotent central endomorphism $e\in\End(M)$ defined by
\[
e\left\{\begin{array}{lcl}
(e_1)&=&e_1\\
(e_2)&=&e_2\\
(f)&=&0
\end{array}\right.
\quad
1-e\left\{\begin{array}{lcl}
(e_1)&=&0\\
(e_2)&=&0\\
(f)&=&f
\end{array}\right.
\]
and it defines the lattice decomposition of $\mathcal{L}(M)$ represented in the above diagram. In addition, $e$ defines a lattice decomposition of the ring $S=\End_R(M)$.
\end{example}

\begin{question}
Does every central idempotent endomorphism in $\End_R(M)$ induce a lattice decomposition of $\mathcal{L}(M)$?
\end{question}

The answer is no, as the following example shows.

\begin{example}\label{ex:8}
Let us consider $M=\mathbb{Z}_{(2)}\times\mathbb{Z}_{(3)}$, where $\mathbb{Z}_{(2)}$ and $\mathbb{Z}_{(3)}$ are the localization of $\mathbb{Z}$ at $2\mathbb{Z}$ and $3\mathbb{Z}$, respectively. We claim $\Hom(\mathbb{Z}_{(2)},\mathbb{Z}_{(3)})=0$. Indeed, for any $f\in\Hom(\mathbb{Z}_{(2)},\mathbb{Z}_{(3)})$, let $f(1)=\frac{a}{d}$, then $f(\frac{1}{3})=\frac{b}{c}$ and it satisfies: $3\frac{b}{c}=\frac{a}{d}$, and $3bd=ac$. By hypothesis $3\nmid{c}$, hence $3\mid{a}$. Similarly, if $f(\frac{1}{3^t})=\frac{b}{c}$, then $3^tdb=ac$, and $3^t\mid{a}$ for every $t\in\mathbb{N}$, which implies $a=0$.

It is clear that $\End(M)=\End(\mathbb{Z}_{(2)})\times\End(\mathbb{Z}_{(3)})$; hence, in $\End(M)$ there exist central idempotent elements. Since $\mathbb{Z}_{(2)}$ and $\mathbb{Z}_{(3)}$ have a non zero isomorphic submodule in common, then $\mathbb{Z}_{(2)}\subseteq{M}$ is not distributive.
\end{example}

\begin{remark}
Observe that any central idempotent element $e\in\End_R(M)=S$ induces a complemented distributive two--sided ideal $eS\subseteq{S}$, with complement $(1-e)S$, see Corollary~\eqref{co:20200804} below. The above example shows that if $e\in\End_R(M)$ is a central idempotent element, then the submodule $e(M)\subseteq{M}$ is not necessarily distributive, even if $S$ has a lattice decomposition.
\end{remark}

In conclusion, the question is: how we may describe the idempotent elements in $\End_R(M)$ that produce lattice decomposition? The next theorem provides the answer.

\begin{theorem}\label{th:151011}
Let $M$ be a right $R$--module, $N\subseteq{M}$ be a direct summand, and let $e\in\End_R(M)$ be an idempotent such that $e(M)=N$, the following statements are equivalent:
\begin{enumerate}[(a)]\sepa
\item
$N$ is complemented distributive.
\item
$e$ is a central and $e(X)\subseteq{X}$ for any submodule $X\subseteq{M}$.
\end{enumerate}
\end{theorem}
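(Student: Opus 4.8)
The plan is to prove the two implications separately, using the characterization of distributive submodules in terms of simple subfactors (Lemma~\eqref{le:1510061641} and Proposition~\eqref{pr:42}).

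For (a) $\Rightarrow$ (b), write $M=N\oplus H$ with $N$ complemented distributive, so $H=(1-e)(M)$ is also distributive by Lemma~\eqref{le:1510061641}. To see that $e(X)\subseteq X$ for every submodule $X\subseteq M$: by Lemma~\eqref{le:1510061641}(d) we have $X=(X\cap N)\oplus(X\cap H)$; since $e$ is the identity on $N$ and zero on $H$, we get $e(X)=X\cap N\subseteq X$. For centrality, take any $f\in S=\End_R(M)$ and any $x\in M$. We must show $ef(x)=fe(x)$. Decompose $x=n+h$ with $n\in N$, $h\in H$; then it suffices to check $ef(n)=f(n)$ (i.e. $f(N)\subseteq N$) and $ef(h)=0$ (i.e. $f(H)\subseteq H$). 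Both follow from the fact, recorded in the last paragraph of Section~2 via Proposition~\eqref{pr:42}(c) applied with the submodule $\Ima(f)$: if $f(N)\not\subseteq N$, then the image of the composite $N\xrightarrow{f}M\xrightarrow{1-e}H$ is a nonzero common subfactor of $N$ and $H$, contradicting distributivity; symmetrically for $f(H)$ using $e\circ f|_H$. Hence $f(N)\subseteq N$ and $f(H)\subseteq H$, so $ef=fe$ on each summand and thus on $M$; $e$ is central.

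For (b) $\Rightarrow$ (a), assume $e$ is central with $e(X)\subseteq X$ for all submodules $X$. We show $N=e(M)$ and $H=(1-e)(M)$ have no isomorphic simple subfactors, which gives distributivity by Lemma~\eqref{le:1510061641}(b). Suppose to the contrary that there are submodules $K'\subseteq K\subseteq N$ and $L'\subseteq L\subseteq H$ with $K/K'\cong L/L'$ simple and nonzero, say via an isomorphism $\varphi$. Lift $\varphi$ to a homomorphism $g:K\to L$, extend it (using injectivity into the relevant subfactor, or directly by building an $R$-linear map on a cyclic submodule of $K$ landing in $L/L'$ and pulling back) to produce an endomorphism $f\in S$ whose behaviour on a suitable cyclic submodule $mR\subseteq K$ realizes the nonzero map into $L/L'\subseteq H/L'$. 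The point is that $e$ acts as the identity on $mR\subseteq N$ but kills the image, which lies in $H$; then centrality forces $ef(m)=fe(m)=f(m)$, while $ef(m)=f(m)$ would have to lie in $N$, whereas $f(m)\in H$ is nonzero modulo $L'$ — contradiction. Equivalently, and more cleanly: the hypothesis $e(X)\subseteq X$ for all $X$ together with $e^2=e$, $e$ central, forces every cyclic submodule $mR$ to split as $(mR\cap N)\oplus(mR\cap H)$ (apply $e$ and $1-e$ to $mR$, both land inside $mR$ and sum to $mR$ with trivial intersection), and then one checks criterion (f) of Proposition~\eqref{pr:42} directly: for $m\in M$, $n\in N=e(M)$, the cyclic modules $(n+(N\cap mR))R$ and $mR/(N\cap mR)\cong mR\cap H$ are sub/quotients of $N$ and of $H$ respectively, and a common simple subfactor would again contradict centrality of $e$ by the same argument.

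The main obstacle is the construction in (b) $\Rightarrow$ (a) of an explicit endomorphism $f\in\End_R(M)$ realizing a hypothetical common simple subfactor — in general one cannot extend a map defined on a subfactor to all of $M$. I expect the clean route is to avoid building $f$ altogether and instead argue that $e(X)\subseteq X$ for all $X$, combined with centrality, yields $mR=(mR\cap N)\oplus(mR\cap H)$ for every $m$, and that a common simple subfactor of $N$ and $H$ restricted to such a cyclic module contradicts the fact that $e$ simultaneously acts as identity on the $N$-part and as zero on the $H$-part; pushing this through Proposition~\eqref{pr:42}(f) is routine once the cyclic splitting is in hand. The other steps — the summand-wise stability $f(N)\subseteq N$, $f(H)\subseteq H$ in (a) $\Rightarrow$ (b) — are immediate from the subfactor criterion and should present no difficulty.
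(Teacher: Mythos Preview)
Your (a) $\Rightarrow$ (b) is correct. The argument for $e(X)\subseteq X$ is the same as the paper's. For centrality you argue directly that any nonzero composite $(1-e)\circ f|_N:N\to H$ would exhibit a common nonzero subfactor, contradicting Lemma~\eqref{le:1510061641}; the paper instead uses Cohn's graph trick (for $f:N\to H$ the graph $\Gamma(f)=\{n+f(n)\mid n\in N\}$ is a complement of $H$, and distributivity forces complements to be unique, so $\Gamma(f)=N$ and $f=0$). Both routes yield $\Hom_R(N,H)=0=\Hom_R(H,N)$ and hence $\End_R(M)\cong\End_R(N)\times\End_R(H)$.

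Your (b) $\Rightarrow$ (a), however, has a genuine gap, and you have made it much harder than it is. Neither of your two attempts closes: you correctly note that one cannot in general extend a map defined on a subfactor to an endomorphism of $M$, and your fallback claim that ``a common simple subfactor would again contradict centrality of $e$'' is never justified --- centrality constrains only elements of $\End_R(M)$, and a hypothetical isomorphism between simple subfactors of $N$ and $H$ need not arise from any such element. So invoking centrality here buys nothing.

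The point you are missing is that the hypothesis $e(X)\subseteq X$ for all $X$ \emph{alone} already gives distributivity, with no detour through simple subfactors and no use of centrality. Since $e(X)\subseteq X$ implies $(1-e)(X)=X-e(X)\subseteq X$ as well, every $x\in X$ decomposes as $x=e(x)+(1-e)(x)$ with $e(x)\in N\cap X$ and $(1-e)(x)\in H\cap X$; hence $X=(X\cap N)\oplus(X\cap H)$ for \emph{every} submodule $X$, not just cyclic ones. This is exactly condition (d) of Lemma~\eqref{le:1510061641}, so $N$ is distributive. You had this computation in hand for cyclic $mR$ but then veered off toward Proposition~\eqref{pr:42}(f); simply apply it to arbitrary $X$ and you are done. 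This is precisely the paper's argument, and it shows in particular that centrality is redundant in (b): the fully--invariant condition on $e$ is by itself equivalent to (a), and centrality then follows from (a).
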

\begin{proof}
If $N\subseteq{M}$ is a complemented distributive submodule with complement $H$, then for every submodule $X\subseteq{M}$ we have $X=(N\cap{X})+(H\cap{X})=(e(M)\cap{X})+((1-e)(M)\cap{X})$, that expressed in terms of the endomorphism $e$, implies:
$$
e(X)=e(e(M)\cap{X})+e((1-e)(M)\cap{X})\subseteq{e(M)\cap{X}}\subseteq{X},
$$
Hence a necessary condition on the endomorphism $e$ to get a complemented distributive submodule is $e(X)\subseteq{X}$, for any submodule $X\subseteq{M}$. This is also a sufficient condition; indeed, if $e(X)\subseteq{X}$ (or equivalently $(1-e)(X)\subseteq{X}$), then, for any element $x\in{X}$ we have $x=e(x)+(1-e)(x)$, where $e(x)\in{e(M)\cap{X}}$ and $(1-e)(x)\in{(1-e)(M)\cap{X}}$.
\par
Let $M$ be a right $R$--module $M$, for any complemented distributive submodule $N\subseteq{M}$ with complement $H$, following Cohn's theory in \cite{COHN:1985}, for any homomorphism $f:N\longrightarrow{H}$ we define a submodule $\Gamma(f)=\{(x,f(x))\in{M}\mid\;x\in{N}\}$. Since $\Gamma(f)$ is a complement of $N$, and it is unique, it follows that $\Gamma(f)=\Gamma(0)=N$. In particular, $\End_R(N,H)=0$, hence we have $\End_R(M)\cong\End_R(N)\times\End_R(H)$.
As a consequence $\End_R(N)$ is a direct summand ideal of $\End_R(M)$, and there exists a central idempotent $e\in\End_R(M)$ such that $\End_R(N)=e\End_R(M)$. Therefore, the idempotent endomorphism that defines the lattice decomposition is central.
\end{proof}

An idempotent endomorphism $e\in\End_R(M)$ is named \textbf{fully invariant} if $e(X)\subseteq{X}$ for any submodule $X\subseteq{M}$.

\begin{corollary}\label{co:20200804}\label{co:20200910}
Let $R$ be a ring, and $e\in{R}$ idempotent, the following statements are equivalent:
\begin{enumerate}[(a)]\sepa
\item
$eR\subseteq{R}$ is a complemented distributive right ideal.
\item
$e$ is central.
\item
$R=Re\times{R(1-e)}$ is a direct product of rings.
\end{enumerate}
In this case $R$ has a lattice decomposition in a direct product of two (twosided) ideals: $Re$ and $R(1-e)$, and conversely; any direct sum decomposition $R=\Ideal{a}\oplus\Ideal{b}$, with two (two--sided) ideals, is a lattice decomposition.
\end{corollary}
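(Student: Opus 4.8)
The plan is to reduce the corollary to the already-proven Theorem~\eqref{th:151011} applied to the module $M = R_R$, and to check the ring-theoretic reformulation separately. First I would observe that $\End_R(R_R) \cong R$ acting by left multiplication, so an idempotent $e \in R$ corresponds to the idempotent endomorphism $\lambda_e : x \mapsto ex$, and $\lambda_e(R) = eR$. Under this identification, a right ideal $X \subseteq R$ is a submodule, and the condition ``$\lambda_e(X) \subseteq X$ for every submodule $X$'' becomes ``$eX \subseteq X$ for every right ideal $X$''. So Theorem~\eqref{th:151011} gives immediately that $eR$ is complemented distributive if and only if $\lambda_e$ is central in $\End_R(R_R) \cong R$ and $\lambda_e$ is fully invariant.

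The main thing to verify is that these two conditions together are equivalent to ``$e$ is central in $R$'', i.e. that $(a) \Leftrightarrow (b)$. One direction is easy: if $e$ is central, then $eX = Xe \subseteq X$ for every right ideal $X$, so $\lambda_e$ is fully invariant; centrality of $\lambda_e$ in $\End_R(R_R)$ also follows (or one just notes $R = Re \oplus R(1-e)$ directly). For the converse, I would use the fully invariant condition applied to the principal right ideals $X = xR$ for each $x \in R$: $e(xR) \subseteq xR$ forces $ex = xr$ for some $r$, and then multiplying by $e$ and using $e^2 = e$ pins down $r$, yielding $ex = xe$ — I expect the cleanest route is to apply fully invariance to $X = (ex - xe + \text{something})R$ or more simply to note $ex = exe$ for all $x$ (take $X = exR \subseteq exR$... actually take $X = xeR$: then $e \cdot xe \in xeR$ so $exe = xer$; and separately $(1-e)$ fully invariant gives $(1-e)x(1-e) = (1-e)x$ reasoning), giving $exe = ex$ and $exe = xe$, hence $ex = xe$. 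This elementary manipulation is the only real content beyond citing the theorem.

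Next I would establish $(b) \Leftrightarrow (c)$, which is the classical Peirce decomposition: $e$ central if and only if $R = eR \oplus (1-e)R$ with both summands two-sided ideals, if and only if $R \cong Re \times R(1-e)$ as a direct product of rings (note $Re = eR$ when $e$ is central). This is standard and I would state it with at most a one-line justification.

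Finally, for the concluding sentence of the corollary — that any direct sum decomposition $R = \Ideal{A} \oplus \Ideal{B}$ into two-sided ideals is a lattice decomposition — I would argue as follows: such a decomposition yields orthogonal idempotents $e \in \Ideal{A}$, $f \in \Ideal{B}$ with $e + f = 1$, and the two-sidedness forces $e$ (and $f$) to be central, so we are back in case $(a)$; then for any right ideal $X$, Lemma~\eqref{le:1510061641}(d) (or directly $x = ex + fx$ with $ex \in \Ideal{A} \cap X$ since $ex \in Xe \subseteq X$... using centrality) gives $X = (X \cap \Ideal{A}) \oplus (X \cap \Ideal{B})$, which is exactly the lattice decomposition condition. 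The only mild obstacle is being careful that the idempotents extracted from an ideal-direct-sum are genuinely central, but two-sidedness of $\Ideal{A}$ and $\Ideal{B}$ makes this routine: $re = r(e+f)e = \dots$ comparing components.
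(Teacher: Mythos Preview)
Your approach is correct and is exactly what the paper intends: the corollary is stated without proof, as an immediate consequence of Theorem~\eqref{th:151011} applied to $M=R_R$. However, you are overcomplicating the equivalence $(a)\Leftrightarrow(b)$. Since the ring isomorphism $\End_R(R_R)\cong R$ (via $f\mapsto f(1)$, inverse $a\mapsto\lambda_a$) is an isomorphism \emph{of rings}, the condition ``$\lambda_e$ is central in $\End_R(R_R)$'' from Theorem~\eqref{th:151011} is literally the condition ``$e$ is central in $R$''. So the theorem already hands you centrality of $e$; the fully-invariant condition is then automatic ($eX=Xe\subseteq X$), and your entire paragraph attempting to extract centrality of $e$ from full invariance alone is unnecessary. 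Your treatment of $(b)\Leftrightarrow(c)$ and of the final sentence (two-sided ideal decompositions give central idempotents, hence lattice decompositions) is fine.
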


\begin{remark}
Observe that if $N\subseteq{M}$ is a complemented distributive submodule, defined by the idempotent endomorphism $e\in\End_R(M)$, then $\End_R(M)e\subseteq\End_R(M)$ is a complemented distributive two--sided ideal; therefore a ring with unity $e$.
\end{remark}

In particular, we have the next result that enhances \cite[Proposition~4.3]{Stephenson:1974} and \cite[Proposition~1.3]{Barnard:1981b}.

\begin{lemma}
Every complemented distributive submodule $N\subseteq{M}$ is stable under any $f\in\End_R(M)$.
\end{lemma}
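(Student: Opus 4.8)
The plan is to use Theorem~\eqref{th:151011} to reduce the statement to a routine verification about central fully invariant idempotents. Let $N\subseteq{M}$ be a complemented distributive submodule, and let $e\in\End_R(M)$ be the idempotent with $e(M)=N$ provided by the decomposition. By Theorem~\eqref{th:151011} we know that $e$ is central in $S=\End_R(M)$ and that $e(X)\subseteq{X}$ for every submodule $X\subseteq{M}$; in particular $e$ is fully invariant.

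Now take an arbitrary $f\in\End_R(M)$ and let $n\in N$; I must show $f(n)\in N$. First I would observe that, since $e$ is central, $fe=ef$, and since $e(M)=N$ with $e_{\mid N}=\id_N$ (as $e$ is idempotent), we have $n=e(n)$. Hence
$$
f(n)=f(e(n))=(fe)(n)=(ef)(n)=e(f(n))\in e(M)=N .
$$
So $f(N)\subseteq N$, which is exactly the assertion that $N$ is stable under $f$. Essentially the centrality of $e$ does all the work: stability of $N$ under every endomorphism is equivalent to $e$ commuting with every endomorphism, and that is precisely what Theorem~\eqref{th:151011} delivers.

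There is no real obstacle here; the only thing to be slightly careful about is making explicit the two facts that $n=e(n)$ for $n\in N$ (which is immediate from $e(M)=N$ and $e^2=e$, giving $e_{\mid N}=\id_N$) and that $e\in\End_R(M)$ central means it commutes with the whole ring $S$, not merely with idempotents. Both are already recorded in the lemmas and theorem preceding this statement, so the argument is a one-line consequence. If one wished to avoid invoking Theorem~\eqref{th:151011} and argue directly, the alternative is to use property (d) of Lemma~\eqref{le:1510061641}: for $X=f(N)+(N\cap H)$ one would try to show $X\subseteq N$, but this is more cumbersome, so I would stick with the central-idempotent argument above.
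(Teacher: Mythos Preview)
Your proof is correct and follows essentially the same route as the paper: invoke Theorem~\eqref{th:151011} to obtain that the idempotent $e$ with $e(M)=N$ is central, then use $fe=ef$ to conclude $f(N)\subseteq N$. The paper writes this set-wise as $f(N)=fe(M)=ef(M)\subseteq e(M)=N$, while you unwind it element-wise; the argument is the same.
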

\begin{proof}
By hypothesis there exists $e\in\End_R(M)$, central idempotent, such that $N=e(M)$. For every endomorphism $f\in\End_R(M)$ we have $f(N)=fe(M)=ef(M)\subseteq{e(M)}=N$.
\end{proof}

If we analysed the question if every central idempotent endomorphism $e\in\End_R(M)$ defines a complemented distributive submodule $e(M)\subseteq{M}$. We found that Example~\eqref{ex:8} gave a negative answer. A necessary and sufficient condition in order to get $e(M)\subseteq{M}$ a complemented distributive submodule, as we have seen before, is that for every submodule $X\subseteq{M}$ we have $e(X)\subseteq{X}$. In consequence, for every element $m\in{M}$ we have $e\langle{m}\rangle\subseteq\langle{m}\rangle$, and there exists $a_m\in{R}$ such that $em=ma_m$.

Let us explore this condition in order to get more examples of complemented distributive submodules.

Of particular interest is the situation in which $A$ is a commutative ring. In this case we have:

\begin{proposition}\label{pr:151011}
Let $A$ be a commutative ring and $N\subseteq{M}$ be a submodule of the $A$--module $M$, the following statements are equivalent:
\begin{enumerate}[(a)]\sepa
\item\label{it:pr:151011-1}
$N\subseteq{M}$ is a complemented distributive submodule.
\item\label{it:pr:151011-2}
There exists a central idempotent endomorphism $e\in\End_A(M)$ such that $e(M)=N$ and $e$ belongs to the closure of $A/\Ann(M)\subseteq\End_A(M)$ in the finite topology, i.e., the topology with subbase of neighbourhoods of any $f\in\End_A(M)$ given by $B(\{m_1,\ldots,m_t\},f)=\{g\in\End_A(M)\mid\;g(m_i)=f(m_i),\,i=1,\ldots,t\}$.
\end{enumerate}
\end{proposition}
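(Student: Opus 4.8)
The plan is to run both implications through Theorem~\eqref{th:151011}, which replaces the (opaque) condition ``$N$ is complemented distributive'' by the (manageable) condition ``the idempotent $e$ with $e(M)=N$ is central and fully invariant'', and to use the support criterion of Corollary~\eqref{co:20200806} to handle the combinatorics of finite subsets. First I would record the harmless preliminary that, $A$ being commutative, multiplication by any $a\in A$ is an $A$-linear endomorphism commuting with every element of $\End_A(M)$ (by $A$-linearity, $f(am)=af(m)$), so $A/\Ann(M)$ really is a subring of the centre of $\End_A(M)$ and the finite topology as described is the relevant one.

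For \eqref{it:pr:151011-2} $\Rightarrow$ \eqref{it:pr:151011-1} I would simply check the hypotheses of Theorem~\eqref{th:151011}(b). Centrality of $e$ is assumed (and is in fact automatic: given $f\in\End_A(M)$ and $m\in M$, pick $a\in A$ with $am=e(m)$ and $af(m)=e(f(m))$ using the neighbourhood $B(\{m,f(m)\},e)$, then $ef(m)=af(m)=f(am)=fe(m)$). For full invariance, let $X\subseteq M$ be a submodule and $x\in X$; the neighbourhood $B(\{x\},e)$ meets $A/\Ann(M)$, so there is $a\in A$ with $ax=e(x)$, and $ax\in X$, hence $e(x)\in X$. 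Thus $e(X)\subseteq X$ for every $X$, and Theorem~\eqref{th:151011} yields that $N=e(M)$ is complemented distributive. This direction is essentially immediate.

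For \eqref{it:pr:151011-1} $\Rightarrow$ \eqref{it:pr:151011-2}, Theorem~\eqref{th:151011} already gives that the idempotent $e$ with $e(M)=N$ is central and fully invariant, so only the closure statement needs proof. Fix $m_1,\dots,m_t\in M$; I must produce a \emph{single} $a\in A$ with $am_i=e(m_i)$ for all $i$, so that multiplication by $a$ lies in $B(\{m_1,\dots,m_t\},e)$. Here I would pass to the power $M^t$: writing $M=N\oplus H$ and $\mathbf{m}=(m_1,\dots,m_t)$, the diagonal idempotent $\bar e=e\oplus\cdots\oplus e$ satisfies $\bar e(M^t)=N^t$ and $M^t=N^t\oplus H^t$. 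Since the support of a finite direct sum is the union of the supports, $\Supp(N^t)=\Supp(N)$ and $\Supp(H^t)=\Supp(H)$, and these are disjoint by Corollary~\eqref{co:20200806} applied to $M=N\oplus H$; so Corollary~\eqref{co:20200806} applied to $M^t=N^t\oplus H^t$ shows $N^t\subseteq M^t$ is complemented distributive, whence $\bar e$ is fully invariant on $M^t$ by Theorem~\eqref{th:151011}. Applying full invariance to the cyclic submodule $A\mathbf{m}\subseteq M^t$ gives $\bar e(\mathbf{m})\in A\mathbf{m}$, i.e.\ there is $a\in A$ with $a\mathbf{m}=\bar e(\mathbf{m})$, which unpacks to $am_i=e(m_i)$ for every $i$. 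Hence $e$ lies in the closure of $A/\Ann(M)$.

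The main obstacle is exactly what the power trick circumvents: full invariance of $e$ on $M$ only gives, for each $m$, \emph{some} $a_m\in A$ with $e(m)=ma_m$, whereas membership in the closure demands one $a$ reproducing $e$ on a whole finite tuple at once. As an alternative to passing to $M^t$, one can argue directly: writing $m_i=n_i+h_i$ with $n_i\in N$, $h_i\in H$, the finitely generated submodules $N'=\sum_i n_iA$ and $H'=\sum_i h_iA$ have disjoint supports, so $\Ann(N')+\Ann(H')=A$; choosing $1=b+a$ with $b\in\Ann(N')$ and $a\in\Ann(H')$ gives $am_i=an_i=(1-b)n_i=n_i=e(m_i)$. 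Either way, the content is to upgrade the element-by-element description of $e$ to one uniform over finite subsets, which is precisely the information recorded by the finite topology.
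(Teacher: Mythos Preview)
Your proof is correct, and the backward implication \eqref{it:pr:151011-2} $\Rightarrow$ \eqref{it:pr:151011-1} is handled exactly as the paper does (a one-line appeal to Theorem~\eqref{th:151011}). For the forward implication, however, you take a genuinely different route. The paper argues by bare hands: writing $m_i=n_i+h_i$ and $e(m_i)=m_ia_i$, it introduces the auxiliary elements $m_{ij}=n_i+h_j$ with their scalars $a_{ij}$, then homogenises in two passes---first replacing each row $a_{i1},\dots,a_{it}$ by the product $a_{i1}\cdots a_{it}$, then replacing each column by the inclusion--exclusion element $x=\sum_i a_{ij}-\sum_{i_1<i_2}a_{i_1j}a_{i_2j}+\cdots$---to obtain a single $x\in A$ with $e(m_i)=m_ix$ for all $i$. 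Your argument instead feeds the problem back through the support criterion of Corollary~\eqref{co:20200806}: either by passing to $M^t$ and applying full invariance to the cyclic submodule $A\mathbf{m}$, or (equivalently and more transparently) by observing that the finitely generated pieces $N'=\sum_i n_iA$ and $H'=\sum_i h_iA$ have disjoint supports, hence comaximal annihilators, and any $a\in\Ann(H')$ with $1-a\in\Ann(N')$ does the job. Your approach is shorter and conceptually cleaner, and it makes the passage to the finitely generated corollaries immediate; the paper's approach has the virtue of being entirely elementary, avoiding any appeal to the prime spectrum or to the support machinery.
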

\begin{proof}
\eqref{it:pr:151011-1} $\Rightarrow$ \eqref{it:pr:151011-2}. %
Let $N\subseteq{M}$ be a complemented distributive submodule and $e\in\End_A(M)$ be a central idempotent element such that $e(M)=N$, and $e(X)\subseteq{X}$ for every submodule $X\subseteq{M}$. If $H=(1-e)(M)$ for any $m\in{M}$ there are $n\in{N}$ and $h\in{H}$ such that $m=n+h$, and there exists $a\in{A}$ such that $n=e(m)=ma=(n+h)a$, hence $ha=0$ and $n(1-a)=0$.
\par
Let $m_1,\ldots,m_t\in{M}$ with $m_i=n_i+h_i$ and $e(m_i)=m_ia_i$. Let us consider new elements:
$m_{ij}=n_i+h_j$; observe that there are elements $a_{ij}\in{A}$ such that $e(m_{ij})=m_{ij}a_{ij}$ and $a_{ii}=a_i$; therefore $n_{ij}(1-a_{ij})=0$ and $h_{ij}a_{ij}=0$, for any $i,j\in\{1,\ldots,t\}$.
\par
If we fix $i$, we have the elements $m_{i1}=n_i+h_1,\ldots,m_{it}=n_i+h_t$, that satisfy:
\[
\begin{array}{lll}
n_i(1-a_{i1}\cdots{a_{it}})=n_i-n_i(a_{i1}\cdots{a_{it}})=0,\\
h_j(a_{i1}\cdots{a_{it}})=0,\\
e(m_{ij})=m_{ij}(a_{i1}\cdots{a_{it}})
\end{array}
\]
Thus we may assume $a_{i1}=\cdots=a_{it}$, and all of them are equal to the product $a_{i1}\cdots{a_{it}}j$ for the former $a_{ij}$. Since this can be done for every index $i$, then we may assume $a_{i1}=\cdots=a_{it}$ for every index $i$.
\par
In this new context, if we fix $j$, we have elements $m_{1j}=n_1+h_j,\ldots,m_{tj}=n_t+h_j$, that satisfy:
\[
\begin{array}{lll}
n_i(1-a_{1j})\cdots(1-a_{tj})=0, \textrm{ for every index }i,\\
(1-a_{1j})\cdots(1-a_{tj})=1-\sum_ia_{ij}+\sum_{i_1<i_2}a_{i_1j}a_{i_2j}
    +\cdots+(-1)^t{a_{1j}}\cdots{a_{tj}},\\
\textrm{let us denote } x=\sum_ia_{ij}-\sum_{i_1<i_2}a_{i_1j}a_{i_2j}+\cdots+(-1)^{t+1}{a_{1j}}\cdots{a_{tj}},\\
h_jx=0,\\
e(m_{ij})=m_{ij}x.
\end{array}
\]
Thus we may assume $a_{1j}=\cdots=a_{tj}$, and all of them are equal to the element $x$ defined just before using the former $a_{ij}$. Since this can be done for every index $j$, then we may assume $a_{1j}=\cdots=a_{tj}=x$ for every index $j$. In consequence, we have found that $e(m_{ij})=m_{ij}x$, and in particular $x\in{B(\{m_1,\ldots,m_t\},e)}\cap{A}$. Therefore, $e$ belongs to the closure of $A/\Ann(M)$ in the finite topology of $\End_A(M)$.
\par
\eqref{it:pr:151011-2} $\Rightarrow$ \eqref{it:pr:151011-1}. %
It is consequence of Theorem~\eqref{th:151011}.
\end{proof}

\begin{corollary}
For any commutative ring $A$ and any finitely generated $A$--module $M$, if $N\subseteq{M}$ is a complemented distributive submodule with idempotent endomorphism $e$, there exists $a\in{A}$ such that $e(m)=ma$ for any $m\in{M}$.
\end{corollary}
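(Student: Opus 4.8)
The plan is to combine Proposition~\eqref{pr:151011} with a compactness argument coming from finite generation. By Proposition~(\ref{pr:151011}(b)), the idempotent endomorphism $e$ belongs to the closure of the image of $A$ (more precisely $A/\Ann(M)$) in $\End_A(M)$ with respect to the finite topology, whose basic open neighbourhoods of $e$ are the sets $B(\{m_1,\dots,m_t\},e)=\{g\in\End_A(M)\mid g(m_i)=e(m_i),\ i=1,\dots,t\}$. So for \emph{every} finite list of elements of $M$ there is an element of $A$ agreeing with $e$ on that list.

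The key observation is that, since $M$ is finitely generated, we may fix once and for all a finite generating set $m_1,\dots,m_t$ of $M$. First I would apply the closure property to this particular finite set: there exists $a\in A$ such that $a\cdot m_i = e(m_i)$ for $i=1,\dots,t$, where $a\cdot m$ denotes the action of $a$ on $M$ (i.e. the image of $a$ under $A\to\End_A(M)$ evaluated at $m$). Then I would check that the two endomorphisms $e$ and "multiplication by $a$" agree on a generating set, hence agree on all of $M$: for an arbitrary $m\in M$ write $m=\sum_{i=1}^t m_i r_i$ with $r_i\in R$, and compute $e(m)=\sum_i e(m_i)r_i=\sum_i (a\cdot m_i) r_i = a\cdot\!\left(\sum_i m_i r_i\right)=a\cdot m$, using that $e$ is $R$-linear and that the $A$-action commutes with the $R=A$-action on $M$. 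This gives $e(m)=ma$ for every $m\in M$, which is exactly the assertion.

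There is essentially no obstacle here beyond citing Proposition~\eqref{pr:151011}; the only point requiring a word of care is the very first step, namely that a complemented distributive submodule really does satisfy hypothesis (b) of that proposition — but that is precisely the content of the implication \eqref{it:pr:151011-1} $\Rightarrow$ \eqref{it:pr:151011-2} already established, so it may be invoked directly. One should also note that finite generation is used in an essential way: it is what lets a single basic open neighbourhood of $e$ (determined by the finitely many generators) pin down $e$ completely, so that the approximating element of $A$ is not merely close to $e$ but literally equal to it as an endomorphism of $M$. In the write-up I would state this cleanly: choose generators, invoke Proposition~\eqref{pr:151011} to get $a\in A$ with $e(m_i)=m_i a$ for all $i$, and extend by $R$-linearity.
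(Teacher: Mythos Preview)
Your argument is correct and is precisely the intended one: the paper states this corollary without proof immediately after Proposition~\eqref{pr:151011}, treating it as a direct consequence, and your derivation---apply the closure statement to a finite generating set $m_1,\dots,m_t$ of $M$, obtain $a\in A$ with $e(m_i)=m_ia$, and extend by $A$--linearity---is exactly how that consequence is meant to be read off. There is nothing to add or correct.
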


Even in this particular case we may characterize complemented distributive submodules.

\begin{corollary}
Let $A$ be a commutative ring and let $M$ be a finitely generated $A$--module; every complemented distributive submodule $N\subseteq{M}$ determines an idempotent element in $A/\Ann(M)$. And conversely, every non zero idempotent in $A/\Ann(M)$ defines a nonzero complemented and distributive submodule of $M$.
\end{corollary}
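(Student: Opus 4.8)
The plan is to combine the preceding corollary, which already gives the equivalence between complemented distributive submodules $N\subseteq M$ and idempotent elements of $A/\Ann(M)$ in the finitely generated case, with a direct verification that the idempotent/complemented-distributive correspondence is a bijection. First I would prove the forward direction: given a complemented distributive submodule $N\subseteq M$ with idempotent endomorphism $e\in\End_A(M)$, the preceding corollary (or Proposition~\eqref{pr:151011} together with finite generation) produces $a\in A$ with $e(m)=ma$ for all $m\in M$; since $e^2=e$ we get $m a^2 = m a$ for all $m$, so $a^2-a\in\Ann(M)$, i.e.\ the class $\overline{a}=a+\Ann(M)$ is idempotent in $A/\Ann(M)$. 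One should note this class is well defined independently of the choice of $a$ representing $e$ (two such choices differ by an element annihilating $M$), and that $N=e(M)=Ma$.

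Next I would prove the converse: suppose $\overline{a}\in A/\Ann(M)$ is a nonzero idempotent, so $a^2-a\in\Ann(M)$. Define $e:M\to M$ by $e(m)=ma$; this is an $A$--module endomorphism because $A$ is commutative, and $e^2(m)=ma^2=ma=e(m)$ since $a^2-a$ kills $M$, so $e$ is idempotent. It is automatically central in $\End_A(M)$ and fully invariant in the sense of the paper, since for any submodule $X\subseteq M$ and any $x\in X$ we have $e(x)=xa\in X$. Hence by Theorem~\eqref{th:151011} the submodule $N:=e(M)=Ma$ is complemented distributive, with complement $(1-e)(M)=M(1-a)$. Finally I would check $N\neq 0$: if $Ma=0$ then $a\in\Ann(M)$, contradicting $\overline{a}\neq 0$; this is exactly where the hypothesis that the idempotent is nonzero is used.

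I do not expect a serious obstacle here, since the substantive work has already been done in Proposition~\eqref{pr:151011} and its first corollary; the only point requiring a little care is the well-definedness of the assignment $N\mapsto\overline{a}$ and the observation that the two constructions are mutually inverse, so that the statement genuinely asserts a bijection between nonzero idempotents of $A/\Ann(M)$ and nonzero complemented distributive submodules of $M$. The finite generation hypothesis is essential precisely at the step where a central fully invariant idempotent endomorphism is upgraded to multiplication by a ring element (it fails in general, cf.\ Example~\eqref{ex:8}), so I would flag that this is the only place the hypothesis enters.
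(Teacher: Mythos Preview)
Your proposal is correct and follows exactly the route the paper intends: the corollary is stated in the paper without proof, as an immediate consequence of the preceding corollary (which produces $a\in A$ with $e(m)=ma$ from Proposition~\eqref{pr:151011} under finite generation) together with Theorem~\eqref{th:151011} for the converse. Your write-up simply spells out the details the paper omits, including the verification that $\overline a$ is idempotent in $A/\Ann(M)$ and that multiplication by $a$ gives a fully invariant idempotent endomorphism; the extra remarks on well-definedness and bijectivity go slightly beyond what the paper claims but are correct and harmless.
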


\begin{example}
Let us consider the abelian group $M=\mathbb{Z}_2\times\mathbb{Z}_2\times{Z}_3$ in the Example~\eqref{ex:150916}; we have $\Ann(M)=6\mathbb{Z}$, and the non trivial idempotents of $\mathbb{Z}/6\mathbb{Z}$ are $\overline{4}$ and $\overline{3}$, which define the subgroups $2M=\langle{f}\rangle$ and $3M=\langle{e_1,e_2}\rangle$.
\end{example}

\section{Decomposition of categories. Examples}\label{se:06}

In this section we will apply the above result to study the decomposition of the category of right $R$--modules and the category $\sigma[M]$ defined by a right $R$--module $M$.

\subsection*{The category $\rMod{R}$}

Each lattice decomposition of $R$, as right $R$--module, is defined by a central idempotent element $e\in{R}$, hence $R=Re\times{R(1-e)}$, being $Re$ and $R(1-e)$ rings (and (twosided) ideals). Therefore, we have a decomposition of the module category as $\rMod{R}\cong\rMod{eR}\times\rMod{(1-e)R}$.

\subsection*{The category $\sigma[M]$}

The lattice decomposition theory of a right $R$--module $M$ is closely linked to the structure of the $M$ module and also to the structure of submodules of the modules it generates; there is a category that studies precisely these modules: the category $\sigma[M]$. The category $\sigma[M]$, as defined in \cite{WISBAUER:1988}, is the full subcategory of $\rMod{R}$ whose objects are all the right $R$--modules isomorphic to modules subgenerated by $M$, i.e., submodules of factors of direct sums of copies of $M$.

Our aim is to study under which circumstances the category $\sigma[M]$ is a direct product of two categories $\mathcal{N}$ and $\mathcal{H}$.

Let us assume $F:\sigma[M]\cong\mathcal{N}\times\mathcal{H}$, without losing of generality we may assume $\mathcal{N}$ and $\mathcal{H}$ are subcategories of $\sigma[M]$, the objects of $\mathcal{N}\times\mathcal{H}$ are pairs $(X,Y)$, where $X$ is and object of $\mathcal{N}$ and $Y$ an object of $\mathcal{H}$, and $F(M)=(N,H)$ satisfying $M=N\oplus{H}$.

First we need a technical result, which will be useful in later developments.

\begin{proposition}\label{pr:10}
Let $N\subseteq{M}$ be a complemented distributive submodule, for every index set $I$ we have $N^{(I)}\subseteq{M^{(I)}}$ is a complemented distributive submodule. The reciprocal also holds.
\end{proposition}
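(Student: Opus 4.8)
The plan is to use the characterization of complemented distributive submodules provided by Theorem~\eqref{th:151011}: $N\subseteq M$ is complemented distributive if and only if there is a central idempotent $e\in\End_R(M)$ with $e(M)=N$ which is fully invariant, i.e.\ $e(X)\subseteq X$ for every submodule $X\subseteq M$. So I would first fix such an $e$ and form the ``diagonal'' endomorphism $e^{(I)}\in\End_R(M^{(I)})$ acting as $e$ on each coordinate. It is clear that $e^{(I)}$ is idempotent with image $N^{(I)}$, and one must check the two conditions of Theorem~\eqref{th:151011}(b) for $e^{(I)}$: centrality in $\End_R(M^{(I)})$, and full invariance.

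For full invariance, the key point is that $e^{(I)}$ is built coordinatewise but submodules of $M^{(I)}$ are \emph{not} coordinatewise; here is where I would use the elementwise criterion. Given $X\subseteq M^{(I)}$ and $x=(x_i)_{i\in I}\in X$ (finitely supported), I need $e^{(I)}(x)=(e(x_i))_i\in X$. Since $N\subseteq M$ is complemented distributive, by the Lemma preceding this section every complemented distributive submodule is stable under any endomorphism of $M$, and more importantly, for each cyclic submodule $\langle m\rangle\subseteq M$ there is $a_m\in R$ with $e(m)=ma_m$ (as noted in the discussion after Corollary~\eqref{co:20200804}). The obstacle is that $a_m$ depends on $m$, so $(e(x_i))_i=(x_ia_{x_i})_i$ need not equal $(x_i)_i\cdot a$ for a single scalar $a\in R$. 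I would get around this exactly as in the proof of Proposition~\eqref{pr:151011}: using the cyclic submodule $\langle x\rangle\subseteq M^{(I)}$, whose image under $e^{(I)}$ lies in $\langle x\rangle$ provided we can produce one scalar $a\in R$ with $e(x_i)=x_ia$ simultaneously for all $i$ in the (finite) support of $x$. In the commutative case this is precisely the finite-topology argument already carried out; in general, one argues more directly: apply $e^{(I)}$ to $x$ and observe $e^{(I)}(x)=e^{(I)}(x)\cdot 1$ decomposes via $M^{(I)}=N^{(I)}\oplus H^{(I)}$ as $x=n+h$ with $n\in N^{(I)}$, $h\in H^{(I)}$ componentwise, and $e^{(I)}(x)=n$. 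Then the criterion of Lemma~\eqref{le:1510061641}(d) applied inside $M^{(I)}$ — once we know $N^{(I)}$ is distributive — would finish it, so I should instead prove distributivity of $N^{(I)}$ first and derive full invariance of $e^{(I)}$ afterward.

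So the cleaner route: show directly that $N^{(I)}$ is a \emph{distributive} submodule of $M^{(I)}$ using Proposition~\eqref{pr:42}(d): for every submodule $X\subseteq M^{(I)}$, the factors $N^{(I)}/(N^{(I)}\cap X)$ and $X/(N^{(I)}\cap X)$ have no isomorphic simple subfactors. Suppose $S$ is a common simple subfactor. A simple subfactor of $N^{(I)}/(N^{(I)}\cap X)$ is in particular a simple subfactor of $N^{(I)}$, hence a simple subfactor of some finite subsum $N^{n}$, hence (since $N$ is complemented distributive, $N$ and $H$ share no simple subfactor by Lemma~\eqref{le:1510061641}(b)) a simple subfactor of $N$ but not of $H$; dually, a simple subfactor of $X/(N^{(I)}\cap X)$ embeds into $M^{(I)}/N^{(I)}\cong H^{(I)}$, so $S$ is a simple subfactor of $H^{(I)}$ and hence of $H$. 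This contradicts $N,H$ having no common simple subfactor. Therefore $N^{(I)}$ is distributive, and since $M^{(I)}=N^{(I)}\oplus H^{(I)}$ is a direct sum decomposition, $N^{(I)}$ is complemented distributive by Lemma~\eqref{le:1510061641}. The main obstacle, as flagged, is the bookkeeping that a simple subfactor of $N^{(I)}$ (resp.\ $H^{(I)}$) is already a simple subfactor of $N$ (resp.\ $H$): a simple subfactor arises as $L/K$ with $K\subseteq L\subseteq N^{(I)}/(\text{something})$; pick $\ell\in L\setminus K$, then $\ell$ has finite support, so $\langle\ell\rangle\subseteq N^{n}$ for some finite $n$, and $L/K=\langle\bar\ell\rangle/(K\cap\langle\bar\ell\rangle R)$ is a subfactor of $N^{n}$, hence of $N$ since $N^{n}/$ anything is built from subfactors of $N$.

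For the converse, suppose $N^{(I)}\subseteq M^{(I)}$ is complemented distributive. By Proposition~\eqref{pr:42}(d), $N^{(I)}$ and any submodule — in particular $M^{(I)}$ viewed suitably — have no isomorphic simple subfactors relative to the intersection; restricting attention to a single coordinate copy $M\hookrightarrow M^{(I)}$, any submodule $X\subseteq M$ gives a submodule of $M^{(I)}$, and $N\cap X=N^{(I)}\cap X$, $N/(N\cap X)$ is a subfactor of $N^{(I)}/(N^{(I)}\cap X)$ while $X/(N\cap X)$ is the same as a subfactor of $M^{(I)}$; so the no-common-simple-subfactor property descends from $M^{(I)}$ to $M$, giving that $N\subseteq M$ is distributive by Proposition~\eqref{pr:42}(d). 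Since $M=N\oplus H$ was assumed a direct sum decomposition, $N$ is complemented distributive. This direction is routine once the subfactor bookkeeping of the forward direction is in hand.
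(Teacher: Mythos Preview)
Your ``cleaner route'' is correct and matches the paper's argument: show that any simple subfactor of $N^{(I)}$ (resp.\ $H^{(I)}$) is supported on a finite subsum and hence is already a simple subfactor of $N$ (resp.\ $H$), then invoke the no-common-simple-subfactor criterion. The paper streamlines this by applying Lemma~\eqref{le:1510061641}(b) directly rather than Proposition~\eqref{pr:42}(d)---since $M^{(I)}=N^{(I)}\oplus H^{(I)}$ is a direct sum you need only compare $N^{(I)}$ with its complement $H^{(I)}$, not with an arbitrary $X$ (your embedding $X/(N^{(I)}\cap X)\hookrightarrow H^{(I)}$ simply rediscovers this)---and you can drop the opening detour through Theorem~\eqref{th:151011} entirely.
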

\begin{proof}
It is obvious that if $M=N\oplus{H}$, then $M^{(I)}=N^{(I)}\oplus{H^{(I)}}$. Otherwise, if $S$ is a simple subfactor of $N^{(I)}$, there exists a finite subset $F\subseteq{I}$ such that $S$ is a subfactor of $N^{(F)}$, hence $S$ is a subfactor of $N$. Since $N$ and $H$ have no isomorphic simple subfactors, $N^{(I)}$ y $H^{(I)}$ have no isomorphic simple subfactors, hence $N^{(I)}\subseteq{M^{(I)}}$ is a complemented distributive submodule.
\end{proof}

Every submodule $Z\subseteq{M}$ corresponds to a pair $(X,Y)$, object of $\mathcal{N}\times\mathcal{H}$ satisfying $X\subseteq{N}$, $Y\subseteq{H}$ and $Z=X\oplus{Y}$, hence $N\subseteq{M}$ is a distributive submodule of $M$ with complement $H$.

Conversely, for any complemented and distributive submodule $N\subseteq{M}$, with complement $H\subseteq{M}$, and any index set $I$, we have $N^{(I)}\subseteq{M^{(I)}}$ is a complemented and distributive submodule with complement $H^{(I)}$, see Proposition~\eqref{pr:10}, and any submodule $X\subseteq{M^{(I)}}$ can be written as $X=(X\cap{N^{(I)}})\oplus(X\cap{H^{(I)}})$, hence  $\dfrac{M^{(I)}}{X}\cong\dfrac{N^{(I)}}{X\cap{N^{(I)}}}\oplus\dfrac{H^{(I)}}{X\cap{H^{(I)}}}
\cong\dfrac{N^{(I)}+X}{X}\oplus\dfrac{H^{(I)}+X}{X}$, and $\dfrac{N^{(I)}}{X\cap{N^{(I)}}}\cong\dfrac{N^{(I)}+X}{X}\subseteq\dfrac{M}{X}$ is a complemented and distributive submodule. As a consequence, every submodule $Y\subseteq\dfrac{M^{(I)}}{X}$ can be written as $Y=\left(Y\cap\dfrac{N^{(I)}+X}{X}\right)\oplus\left(Y\cap\dfrac{N^{(I)}+X}{X}\right)$, where $Y\cap\dfrac{N^{(I)}+X}{X}$ is an object of $\sigma[N]$, and $Y\cap\dfrac{H^{(I)}+X}{X}$ is an object of $\sigma[H]$. Therefore there is a category isomorphism $\sigma[M]\cong\sigma[N]\times\sigma[H]$. Compare with \cite[Proposition 2.2]{Vanaja:1996} and \cite[2.4]{Wisbauer:2000}.

\begin{theorem}\label{th:61}
With the above assumptions. Let $M$ be a right $R$--module, the following statements are equivalent:
\begin{enumerate}[(a)]\sepa
\item
$\sigma[M]$ is a direct product of two categories, $\sigma[M]\cong\sigma[N]\times\sigma[H]$.
\item
$M$ has a complemented and distributive submodule $N\subseteq{M}$, with complement $H$.
\end{enumerate}
\end{theorem}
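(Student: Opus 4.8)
The plan is to prove Theorem~\eqref{th:61} by establishing both implications, noting that the bulk of the work---the implication (b) $\Rightarrow$ (a)---has essentially been carried out in the discussion preceding the statement, so the proof will mostly consist of assembling that discussion and supplying the converse direction cleanly.

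For (b) $\Rightarrow$ (a): assume $M = N \oplus H$ with $N$ complemented distributive. By Proposition~\eqref{pr:10}, for every index set $I$ the submodule $N^{(I)} \subseteq M^{(I)}$ is complemented distributive with complement $H^{(I)}$, so by Lemma~\eqref{le:1510061641}(d) every submodule $X \subseteq M^{(I)}$ splits as $X = (X \cap N^{(I)}) \oplus (X \cap H^{(I)})$. This gives $M^{(I)}/X \cong \frac{N^{(I)}+X}{X} \oplus \frac{H^{(I)}+X}{X}$, and since a quotient of a complemented distributive submodule by a submodule is again complemented distributive (using Proposition~\eqref{pr:42}(1) together with the splitting), the submodule $\frac{N^{(I)}+X}{X} \subseteq M^{(I)}/X$ is complemented distributive. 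Iterating once more, any submodule $Y$ of $M^{(I)}/X$ splits compatibly, with $Y \cap \frac{N^{(I)}+X}{X}$ lying in $\sigma[N]$ and $Y \cap \frac{H^{(I)}+X}{X}$ lying in $\sigma[H]$. Since every object of $\sigma[M]$ is by definition isomorphic to such a $Y$, and these decompositions are functorial (because the decomposition of any object is the unique one induced by the ambient splitting $M^{(I)} = N^{(I)} \oplus H^{(I)}$, hence preserved by morphisms since $N$ is fully invariant by Lemma~\eqref{le:1510061641}), one obtains a category isomorphism $\sigma[M] \cong \sigma[N] \times \sigma[H]$.

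For (a) $\Rightarrow$ (b): suppose $F : \sigma[M] \cong \mathcal{N} \times \mathcal{H}$, and as in the setup let $F(M) = (N, H)$, which, since $F$ respects biproducts and $(N,H) = (N,0) \oplus (0,H)$, yields $M = N \oplus H$ with $N, H \in \sigma[M]$. Every submodule $Z \subseteq M$ is an object of $\sigma[M]$, hence corresponds under $F$ to a pair $(X, Y)$ with $X \subseteq N$ an object of $\mathcal{N}$ and $Y \subseteq H$ an object of $\mathcal{H}$; because the inclusion $Z \hookrightarrow M$ is a monomorphism, $F$ carries it to the componentwise inclusion, forcing $Z = X \oplus Y = (Z \cap N) \oplus (Z \cap H)$. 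By Lemma~\eqref{le:1510061641}(d) this shows $N \subseteq M$ is a complemented distributive submodule with complement $H$.

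The main obstacle I anticipate is the functoriality / naturality bookkeeping in (b) $\Rightarrow$ (a): one must check not merely that each object of $\sigma[M]$ decomposes, but that the assignment $Y \mapsto (Y \cap \frac{N^{(I)}+X}{X},\, Y \cap \frac{H^{(I)}+X}{X})$ is compatible with morphisms, i.e.\ that a morphism between two objects respects these intersection decompositions. This reduces to the fact that complemented distributive submodules are fully invariant (Lemma~\eqref{le:1510061641} and the stability lemma in Section~\ref{se:5}), together with care about how an abstract object of $\sigma[M]$ is realized as a subquotient of a power $M^{(I)}$ and the independence of the construction from that choice; the rest is routine. Given that the paragraphs preceding the theorem already spell out the construction, I would keep the written proof short, citing Proposition~\eqref{pr:10} and the preceding discussion for (b) $\Rightarrow$ (a) and giving the few lines above for (a) $\Rightarrow$ (b).
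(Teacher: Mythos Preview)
Your proposal is correct and follows essentially the same approach as the paper: the discussion immediately preceding Theorem~\eqref{th:61} in the text is precisely the argument you assemble, using Proposition~\eqref{pr:10} to pass to direct sums, the splitting of submodules via Lemma~\eqref{le:1510061641}(d), and the resulting decomposition of every subquotient into a $\sigma[N]$--part and a $\sigma[H]$--part for (b) $\Rightarrow$ (a), while (a) $\Rightarrow$ (b) reads off the lattice decomposition of $M$ from the componentwise structure of subobjects under the equivalence. Your added remarks on functoriality and full invariance make explicit what the paper leaves implicit (and defers to \cite{Vanaja:1996} and \cite{Wisbauer:2000}); the minor citation slips (the quotient--distributivity fact is the unnumbered proposition after Proposition~\eqref{pr:32}, not Proposition~\eqref{pr:42}, and full invariance is the lemma after Corollary~\eqref{co:20200804}) do not affect the argument.
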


As a consequence of Corollary~\eqref{co:20200806b}, if $A$ is a noetherian commutative ring, $M$ an $A$--module, and $E(M)$ its injective hull, for any complemented distributive submodule $N\subseteq{M}$ we have that $E(N)\subseteq{E(M)}$ is distributive. This result does not necessarily hold in a non--commutative framework as the following example shows, see \cite{Garcia/Jara/Merino:1999}.

\begin{example}
Let $K$ be a field and $R=\begin{pmatrix}K&0\\K&K\end{pmatrix}$ be a ring. The maximal right ideals of $R$ are
$\Ideal{p}=\begin{pmatrix}K&0\\K&0\end{pmatrix}$ and
$\Ideal{q}=\begin{pmatrix}0&0\\K&K\end{pmatrix}$; hence there are, up to isomorphism, two different simple right $R$--modules. Let us consider the right $R$--module
$N=\begin{pmatrix}K&0\\0&0\end{pmatrix}$, which is isomorphic to $R/\Ideal{q}$, and the cyclic right $R$--module
$E=\begin{pmatrix}K&K\\0&0\end{pmatrix}$, generated by $\begin{pmatrix}0&1\\0&0\end{pmatrix}$. Since $E$ is injective and the inclusion $N\subseteq{E}$ is essential, then $E$ is the injective hull of $N$. In addition we have an isomorphism $E/N\cong{R/\Ideal{p}}$.
\par
Consider now the right $R$--module $M=N\oplus(E/N)$. Since both factors are simple right $R$--module, $M$ has a lattice decomposition; $N\subseteq{M}$ is a complemented distributive submodule. Otherwise, $E(M)=E(N)\oplus{E(E/N)}=E\oplus(E/N)$, and $E\subseteq{E(M)}$ is not a distributive submodule. Indeed, $E(M)$ has no nontrivial complemented distributive submodules.
\end{example}

\end{document}